\newtheorem{theorem}{Theorem}[section]
\newtheorem{lemma}[theorem]{Lemma}
\newtheorem{corollary}[theorem]{Corollary}
\theoremstyle{definition}
\newtheorem{definition}[theorem]{Definition}
\newtheorem{question}[theorem]{Question}
\newtheorem{remark}[theorem]{Remark}
\newlength{\Oldarrayrulewidth}
\renewcommand{\i}{\textup{\textbf{i}}}
\renewcommand{\j}{\textup{\textbf{j}}}
\newcommand{\p}{\textup{\textbf{p}}}
\newcommand{\q}{\textup{\textbf{q}}}
\newcommand{\lcm}{\textup{lcm}}
\renewcommand{\mod}[2]{#1\textup{ (mod }#2\textup{)}}
\renewcommand{\bmod}[2]{\bf#1\textup{\bf (mod}\,#2\textup{)}}
\def\m@th{\mathsurround=0pt}
\def\sm#1{\null\,\vcenter{\baselineskip9pt\lineskip.23ex\m@th
    \ialign{\hfil$\scriptstyle##$\hfil&&\ \hfil$\scriptstyle##$\hfil\crcr
    \mathstrut\crcr\noalign{\kern-\baselineskip}
    #1\crcr\mathstrut\crcr\noalign{\kern-\baselineskip}}}\,}
\def\smnp#1{\null\,\vcenter{\baselineskip9pt\lineskip.23ex\m@th
    \ialign{\hfil$\scriptstyle##$\hfil&&\ \ \hfil$\scriptstyle##$\hfil\crcr
    \mathstrut\crcr\noalign{\kern-\baselineskip}
    #1\crcr\mathstrut\crcr\noalign{\kern-\baselineskip}}}\,}
\begin{document}

\title{Covering systems with odd moduli}
\author[1]{Joshua~Harrington\thanks{joshua.harrington@cedarcrest.edu}}
\author[2]{Yewen~Sun\thanks{yewen@ucsb.edu}}
\author[3]{Tony~W.~H.~Wong\thanks{wong@kutztown.edu}}
\affil[1]{Department of Mathematics, Cedar Crest College}
\affil[2]{Department of Mathematics, University of California, Santa Barbara}
\affil[3]{Department of Mathematics, Kutztown University of Pennsylvania}
\date{\today}

\maketitle

\begin{abstract}
The concept of a covering system was first introduced by Erd\H{o}s in 1950.  Since their introduction, a lot of the research regarding covering systems has focused on the existence of covering systems with certain restrictions on the moduli.  Arguably, the most famous open question regarding covering systems is the odd covering problem.  In this paper, we explore a variation of the odd covering problem, allowing a single odd prime to appear as a modulus in the covering more than once, while all other moduli are distinct, odd, and greater than $1$.  We also consider this variation while further requiring the moduli of the covering system to be square-free.\\
\textit{MSC:} 11A07.\\
\textit{Keywords:} Covering system, odd covering.
\end{abstract}

\section{Introduction}\label{sec:intro}
In 1950, Erd\H{o}s \cite{erdos} introduced the concept of a covering system of the integers, which is defined as follows.

\begin{definition}
A \emph{covering system of the integers} is a finite collection of congruences such that every integer satisfies at least one of the congruences in the collection.
\end{definition}

Since their introduction, the existence of certain types of covering systems and their applications has been the investigation of many articles \cite{ffk2008,ffk,ffkpy,filharr,fh,fj,hough,jw,n,o}.  Of particular interest to the investigation of this paper is the speculative existence of an odd covering.  The following is a well known question, originally asked by Erd\H{o}s.

\begin{question}[The Odd Covering Problem]
Does there exist a covering system of the integers whose moduli are all odd, distinct, and greater than 1?
\end{question}

If the answer to this question is yes, then such a covering system would be called an \emph{odd covering system of the integers}.  Hough and Nielsen \cite{hn} showed in 2019 that if the moduli of a covering system are distinct and greater than 1, then at least one of the moduli of the system must be divisible by either $2$ or $3$.  

If an odd covering exists such that all moduli of the system are square-free, then we refer to this as a \emph{square-free odd covering system of the integers}.  In 2005, Guo and Sun \cite{gs} showed that the lowest common multiple of all moduli in a square-free odd covering system (if it exists) must have at least 22 distinct prime factors.

In 2015, Harrington \cite{h} posed the following question related to the odd covering problem.

\begin{question}
Given an odd integer $n\geq 3$, does there exist a covering system of the integers that has $n$ as a modulus at most twice, such that all other moduli are odd, distinct, and greater than 1?
\end{question}

Harrington answered this question in the affirmative when $n=3$ in the same paper.  The following more general question was later asked by Hammer, Harrington, and Marotta \cite{hhm} in 2018.

\begin{question}\label{question:mainquestion}
Given a fixed odd prime $p$, what is the smallest nonnegative integer $t_p$ for which there exists a covering system of the integers that has $p$ as a modulus exactly $t_p$ times, such that all other moduli are odd, distinct, and greater than 1?
\end{question}

If an odd covering system of the integers exists, then the answer to Question~\ref{question:mainquestion} is that $t_p\leq 1$ for all odd primes $p$.  Harrington's result yields $t_3\leq 2$ and Hammer, Harrington, and Marotta showed that $t_5\leq 3$.  One may further ask a similar question regarding square-free odd coverings.


\begin{question}\label{question:mainquestionsquarefree}
Given a fixed odd prime $p$, what is the smallest nonnegative integer $\tau_p$ for which there exists a covering system of the integers that has $p$ as a modulus exactly $\tau_p$ times, such that all other moduli are odd, distinct, square-free, and greater than 1?
\end{question}

In Theorem~\ref{thm:3sqrfreeimpliesodd} of Section~\ref{sec:squarefree}, we establish that if $\tau_p\leq 2$ for any odd prime $p$, then there exists an odd covering of the integers.  We also show in Section~\ref{sec:squarefree} that $\tau_7\leq 6$.  In Section~\ref{sec:main}, we show that $t_7\leq 4$, $t_{11}\leq 7$, and $t_p\leq p-5$ for all primes $p\geq23$.  We conclude our investigation in Section~\ref{sec:twolemmas} by introducing a lemma that allows us to generalize the main results in Sections~\ref{sec:squarefree} and \ref{sec:main}.

\section{Condensed tree diagram and notation}\label{sec:tree}

We will adopt the notation used in \cite{filharr}.  Given lists $[r_1,r_2,\ldots,r_k]$ and $[m_1,m_2,\ldots,m_k]$ with $m_1,m_2\ldots,m_k$ pairwise relatively prime positive integers, we let 
$$([r_1,r_2,\ldots,r_k],[m_1,m_2,\ldots,m_k])$$
denote the congruence system $x\equiv\mod{r_j}{m_j}$, $1\leq j\leq k$. By the Chinese Remainder Theorem, this congruence system is equivalent to the congruence $x\equiv\mod{r}{m}$, where $m=m_1m_2\cdots m_k$ and $r\in\{0,1,\dotsc,m-1\}$. Hence, we sometimes also refer to this congruence system as simply a congruence. With this notation, we provide the following example of a covering system.
\begin{equation}\label{eq:example}\begin{gathered}
([1],[2]),\ ([0,1],[2,3]),\ ([0,2],[2,3]),\ ([0,3],[2,9]),\\
([0,0,0],[2,9,5]),\ ([0,0,1],[2,9,5]),\ ([0,0,2],[2,9,5]),\ ([0,0,3],[2,9,5]),\ ([0,0,4],[2,9,5]),\\
([0,6,0],[2,9,5]),\ ([0,6,1],[2,9,5]),\ ([0,6,2],[2,9,5]),\ ([0,6,3],[2,9,5]),\ ([0,6,4],[2,9,5]).
\end{gathered}\end{equation}

This is a covering system since we first cover all integers that are congruent to $1$ modulo $2$ with $([1],[2])$, then all integers congruent to $0$ modulo $2$ are split into congruence classes $0,1,2$ modulo $3$, covered by $([0,0],[2,3])$, $([0,1],[2,3])$, $([0,2],[2,3])$. The congruences $([0,1],[2,3])$ and $([0,2],[2,3])$ are explicit in \eqref{eq:example}, while integers covered by $([0,0],[2,3])$ are further split into congruence classes $0,3,6$ modulo $9$, covered by $([0,0],[2,9])$, $([0,3],[2,9])$, $([0,6],[2,9])$. Once again, the congruence $([0,3],[2,9])$ is explicit in \eqref{eq:example}, while integers covered by each of $([0,0],[2,9])$ and $([0,6],[2,9])$ are split into $0,1,2,3,4$ modulo $5$.

The covering system given in \eqref{eq:example}, however, has many repeated moduli. To eliminate repeated moduli, we forgo some of the congruences inside a congruence system. For example, we forgo $\mod{0}{2}$ in $([0,1],[2,3])$ and write it as $([1],[3])$, and we forgo $\mod{0}{2}$ and $\mod{0}{9}$ in $([0,0,0],[2,9,5])$ to get $([0],[5])$. When we forgo some congruences inside a congruence system, the set of integers covered by the new congruence system is a superset of the original one. Hence, the covering system given by \eqref{eq:example} can be modified to obtain the following covering system.
\begin{equation}\label{eq:example2}\begin{gathered}
([1],[2]),\quad ([1],[3]),\quad ([0,2],[2,3]),\quad ([3],[9]),\\
([0],[5]),\quad ([0,1],[2,5]),\quad ([0,2],[3,5]),\quad ([0,0,3],[2,3,5]),\quad ([0,4],[9,5]),\\
([0],[5]),\quad ([0,1],[2,5]),\quad ([0,2],[3,5]),\quad ([0,0,3],[2,3,5]),\quad ([0,6,4],[2,9,5]).
\end{gathered}\end{equation}
Notice that several congruence systems are repeated in \eqref{eq:example2}, thus we can simplify the covering system as follows.
\begin{equation*}\label{eq:example3}\begin{gathered}
([1],[2]),\quad ([1],[3]),\quad ([0,2],[2,3]),\quad ([3],[9]),\quad ([0],[5]),\\
([0,1],[2,5]),\quad ([0,2],[3,5]),\quad ([0,0,3],[2,3,5]),\quad ([0,4],[9,5]),\quad ([0,6,4],[2,9,5]).
\end{gathered}\end{equation*}
This corresponds to the following set of congruences.
\begin{equation}\label{eq:example4}
\begin{gathered}
\mod{1}{2},\quad \mod{1}{3},\quad \mod{2}{6},\quad \mod{3}{9},\quad \mod{0}{5},\\
\mod{6}{10},\quad \mod{12}{15},\quad \mod{18}{30},\quad \mod{9}{45},\quad \mod{24}{90}.
\end{gathered}\end{equation}

To visualize the covering system given by \eqref{eq:example}, we represent the process of splitting integers into congruence classes using a tree diagram, as illustrated in Figure~\ref{fig:example}. We refer to the node at the top as the \emph{root}, and the nodes with no further branches below as the \emph{leaves}. The congruences given at the leaves of this tree come from \eqref{eq:example4}.
\begin{figure}[H]
\centering
\begin{tikzpicture}[scale=1.05]
\draw(-2,-1)--(0,0)--(2,-1);\node[below]at(2,-1){$\footnotesize\substack{\bmod{1}{2}}$};
\draw(-1.075,-0.35)node[rotate={atan(1/2)}]{$\footnotesize\substack{([0],[2])}$};\draw (1.075,-0.35) node [rotate={atan(-1/2)}]{$\footnotesize\substack{([1],[2])}$};
\draw(-4,-2.5)--(-2,-1)--(0,-2.5);\draw(-2,-1)--(-2,-2.5);\node[below]at(-2,-2.4){$\footnotesize\substack{\bmod{1}{3}}$};\node[below]at(0,-2.4){$\footnotesize\substack{\bmod{2}{6}}$};
\draw(-3.25,-1.75)node[rotate={atan(3/4)}]{$\footnotesize\substack{([0,0],[2,3])}$};\draw(-2.15,-1.85)node[rotate=90]{$\footnotesize\substack{([0,1],[2,3])}$};\draw (-0.75,-1.75) node [rotate={atan(-3/4)}]{$\footnotesize\substack{([0,2],[2,3])}$};
\draw(-8,-4)--(-4,-2.5)--(0,-4);\draw(-4,-2.5)--(-4,-4);\node[below]at(-4,-4){$\footnotesize\substack{\bmod{3}{9}}$};
\draw(-6.45,-3.25)node[rotate={atan(1.5/4)}]{$\footnotesize\substack{([0,0],[2,9])}$};\draw(-4.15,-3.3)node[rotate=90]{$\footnotesize\substack{([0,3],[2,9])}$};\draw(-1.55,-3.25)node[rotate={atan(-1.5/4)}]{$\footnotesize\substack{([0,6],[2,9])}$};
\draw(-11,-6)--(-8,-4)--(-5,-6);\draw(-9.5,-6)--(-8,-4)--(-6.5,-6);\draw(-8,-4)--(-8,-6);\node[below]at(-11,-6){$\footnotesize\substack{\bmod{0}{5}}$};\node[below]at(-9.5,-6){$\footnotesize\substack{\bmod{6}{10}}$};\node[below]at(-8,-6){$\footnotesize\substack{\bmod{12}{15}}$};\node[below]at(-6.5,-6){$\footnotesize\substack{\bmod{18}{30}}$};\node[below]at(-5,-6){$\footnotesize\substack{\bmod{9}{45}}$};
\draw(-9.8,-5)node[rotate={atan(2/3)}]{$\footnotesize\substack{([0,0,0],[2,9,5])}$};\draw(-9.1,-5.2)node[rotate={atan(4/3)}]{$\footnotesize\substack{([0,0,1],[2,9,5])}$};\draw(-8.15,-5.2)node[rotate=90]{$\footnotesize\substack{([0,0,2],[2,9,5])}$};\draw(-6.9,-5.2)node[rotate={atan(-4/3)}]{$\footnotesize\substack{([0,0,3],[2,9,5])}$};\draw(-6.2,-5)node[rotate={-atan(2/3)}]{$\footnotesize\substack{([0,0,4],[2,9,5])}$};
\draw(-3,-6)--(0,-4)--(3,-6);\draw(-1.5,-6)--(0,-4)--(1.5,-6);\draw(0,-4)--(0,-6);\node[below]at(-3,-6){$\footnotesize\substack{\bmod{0}{5}}$};\node[below]at(-1.5,-6){$\footnotesize\substack{\bmod{6}{10}}$};\node[below]at(0,-6){$\footnotesize\substack{\bmod{12}{15}}$};\node[below]at(1.5,-6){$\footnotesize\substack{\bmod{18}{30}}$};\node[below]at(3,-6){$\footnotesize\substack{\bmod{24}{90}}$};
\draw(-1.8,-5)node[rotate={atan(2/3)}]{$\footnotesize\substack{([0,6,0],[2,9,5])}$};\draw(-1.1,-5.2)node[rotate={atan(4/3)}]{$\footnotesize\substack{([0,6,1],[2,9,5])}$};\draw(-0.15,-5.2)node[rotate=90]{$\footnotesize\substack{([0,6,2],[2,9,5])}$};\draw(1.1,-5.2)node[rotate={atan(-4/3)}]{$\footnotesize\substack{([0,6,3],[2,9,5])}$};\draw(1.8,-5)node[rotate={-atan(2/3)}]{$\footnotesize\substack{([0,6,4],[2,9,5])}$};
\end{tikzpicture}
\caption{A tree diagram of a covering system showing all moduli and residues}
\label{fig:example}
\end{figure}
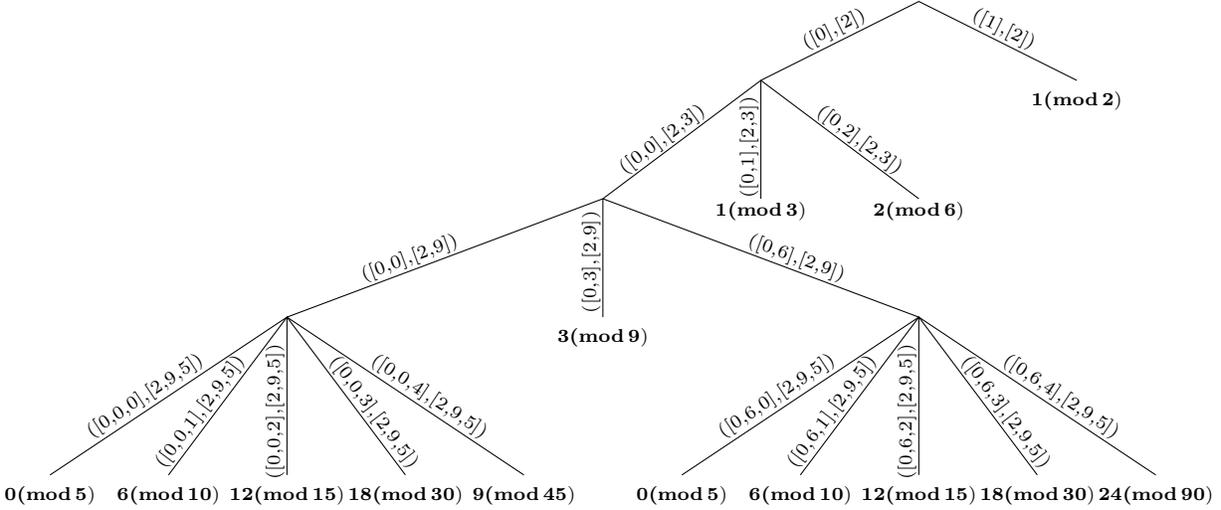

The branches immediately below a node in the tree diagram are called the \emph{child branches} of that node. At each non-leaf node, the number of child branches is always given by a prime number $p$, and we refer to such a node as a \emph{$p$-node}. Each $p$-node represents a certain subset $S$ of the integers, and its child branches represent a partition of $S$ into congruence classes modulo $p^\alpha m'$ for some positive integers $\alpha$ and $m'$, where $\alpha$ is equal to the number of $p$-nodes along the path from the root and $p\nmid m'$. If the $p$-node represents the subset of integers satisfying the congruence $([r,r'],[p^{\alpha-1},m'])$, then we arrange the congruences such that $([r,r'],[p^\alpha,m'])$, $([r+p^{\alpha-1},r'],[p^\alpha,m'])$, $([r+2p^{\alpha-1},r'],[p^\alpha,m'])$, $\dotsc$, $([r+(p-1)p^{\alpha-1},r'],[p^\alpha,m'])$ are placed from left to right among the child branches. In other words, the congruences on the branches are implied by the positioning of the branches and thus can be omitted from the diagram, as illustrated in Figure~\ref{fig:example1}.
\begin{figure}[H]
\centering
\begin{tikzpicture}[scale=1.05]
\draw(-2,-1)--(0,0)--(2,-1);\node[below]at(2,-1){$\footnotesize\substack{\bmod{1}{2}}$};
\draw(-4,-2)--(-2,-1)--(0,-2);\draw(-2,-1)--(-2,-2);\node[below]at(-2,-2){$\footnotesize\substack{\bmod{1}{3}}$};\node[below]at(0,-2){$\footnotesize\substack{\bmod{2}{6}}$};
\draw(-8,-3)--(-4,-2)--(0,-3);\draw(-4,-2)--(-4,-3);\node[below]at(-4,-3){$\footnotesize\substack{\bmod{3}{9}}$};
\draw(-11,-4)--(-8,-3)--(-5,-4);\draw(-9.5,-4)--(-8,-3)--(-6.5,-4);\draw(-8,-3)--(-8,-4);\node[below]at(-11,-4){$\footnotesize\substack{\bmod{0}{5}}$};\node[below]at(-9.5,-4){$\footnotesize\substack{\bmod{6}{10}}$};\node[below]at(-8,-4){$\footnotesize\substack{\bmod{12}{15}}$};\node[below]at(-6.5,-4){$\footnotesize\substack{\bmod{18}{30}}$};\node[below]at(-5,-4){$\footnotesize\substack{\bmod{9}{45}}$};
\draw(-3,-4)--(0,-3)--(3,-4);\draw(-1.5,-4)--(0,-3)--(1.5,-4);\draw(0,-3)--(0,-4);\node[below]at(-3,-4){$\footnotesize\substack{\bmod{0}{5}}$};\node[below]at(-1.5,-4){$\footnotesize\substack{\bmod{6}{10}}$};\node[below]at(0,-4){$\footnotesize\substack{\bmod{12}{15}}$};\node[below]at(1.5,-4){$\footnotesize\substack{\bmod{18}{30}}$};\node[below]at(3,-4){$\footnotesize\substack{\bmod{24}{90}}$};
\end{tikzpicture}
\caption{A tree diagram of a covering system with congruences on the branches omitted}
\label{fig:example1}
\end{figure}
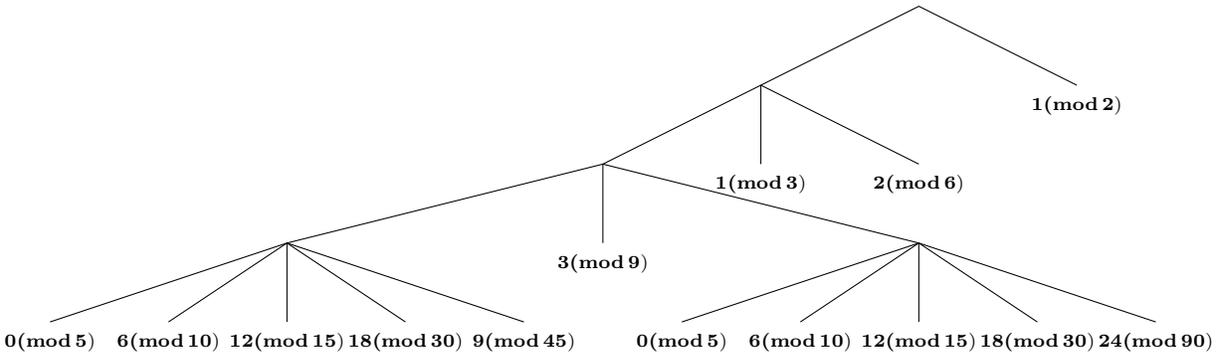

Next, consider a fixed leaf in the tree diagram. Recall that the congruence is originally obtained by combining all congruences along the path from the root using the Chinese Remainder Theorem. We then choose a subset of these congruences along the path to form the congruence displayed at the leaf. Note that the residue in this congruence is implied by the modulus of the leaf since the factors in the modulus determine which congruences along the path were used. Hence, the residue at the leaf can also be omitted from the diagram, as illustrated in Figure~\ref{fig:example2}.
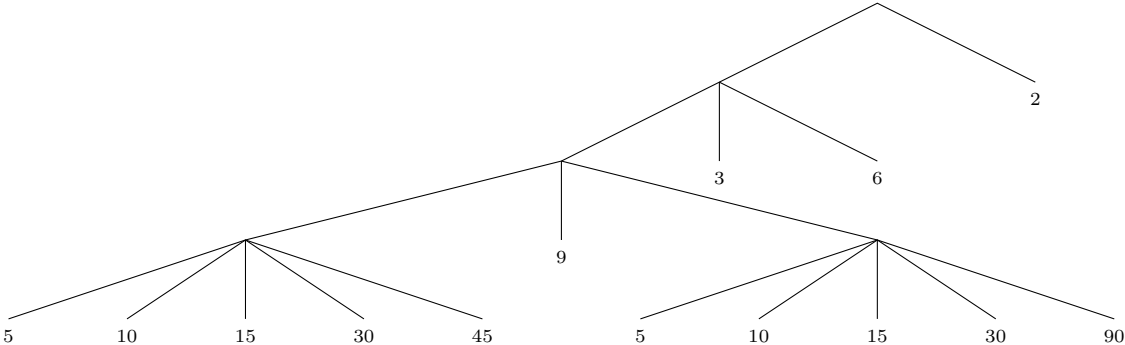
\begin{figure}[H]
\centering
\begin{tikzpicture}[scale=1.05]
\draw(-2,-1)--(0,0)--(2,-1);\node[below]at(2,-1){$\footnotesize\substack{2}$};
\draw(-4,-2)--(-2,-1)--(0,-2);\draw(-2,-1)--(-2,-2);\node[below]at(-2,-2){$\footnotesize\substack{3}$};\node[below]at(0,-2){$\footnotesize\substack{6}$};
\draw(-8,-3)--(-4,-2)--(0,-3);\draw(-4,-2)--(-4,-3);\node[below]at(-4,-3){$\footnotesize\substack{9}$};
\draw(-11,-4)--(-8,-3)--(-5,-4);\draw(-9.5,-4)--(-8,-3)--(-6.5,-4);\draw(-8,-3)--(-8,-4);\node[below]at(-11,-4){$\footnotesize\substack{5}$};\node[below]at(-9.5,-4){$\footnotesize\substack{10}$};\node[below]at(-8,-4){$\footnotesize\substack{15}$};\node[below]at(-6.5,-4){$\footnotesize\substack{30}$};\node[below]at(-5,-4){$\footnotesize\substack{45}$};
\draw(-3,-4)--(0,-3)--(3,-4);\draw(-1.5,-4)--(0,-3)--(1.5,-4);\draw(0,-3)--(0,-4);\node[below]at(-3,-4){$\footnotesize\substack{5}$};\node[below]at(-1.5,-4){$\footnotesize\substack{10}$};\node[below]at(0,-4){$\footnotesize\substack{15}$};\node[below]at(1.5,-4){$\footnotesize\substack{30}$};\node[below]at(3,-4){$\footnotesize\substack{90}$};
\end{tikzpicture}
\caption{A tree diagram of a covering system showing only the moduli}
\label{fig:example2}
\end{figure}

Continuing to simplify the tree diagram, if the moduli at several leaves branching from a node share the same factor $m_0$, then we can merge them to form a wedge, and we write $\{m_1,m_2,\dotsc,m_k\}\times m_0$ to indicate $2^k$ moduli, given by the product of any subset of $\{m_1,m_2,\dotsc,m_k\}$ together with $m_0$. These congruences are arranged so that they increase in magnitude from left to right among the child branches. This allows us to obtain a condensed tree diagram in Figure~\ref{fig:example3}.

\begin{figure}[H]
\centering
\begin{tikzpicture}[scale=1.2]
\draw(-2,-1)--(0,0)--(2,-1);\node[below]at(2,-1){$\substack{2}$};
\draw(-2,-1)--(0,-1.25)--(-0.25,-1.5)--cycle;\node[right]at(-0.125,-1.375){$\substack{\{2\}\times3\\2\text{ branches}}$};\draw(-2,-1)--(-2,-2);
\draw(-4,-3)--(-2,-2)--(2,-3);\draw(-2,-2)--(-2,-3);\node[below]at(-2,-3){$\substack{3^2}$};
\draw(-4,-3)--(-6,-3.25)--(-5.75,-3.5)--cycle;\node[left]at(-6,-3.375){$\substack{\{2,3\}\times5\\4\text{ branches}}$};\draw(-4,-3)--(-4,-4);\node[below]at(-4,-4){$\substack{3^2\times5}$};
\draw(2,-3)--(0,-3.25)--(0.25,-3.5)--cycle;\node[left]at(0,-3.375){$\substack{\{2,3\}\times5\\4\text{ branches}}$};\draw(2,-3)--(2,-4);\node[below]at(2,-4){$\substack{2\times3^2\times5}$};
\end{tikzpicture}
\caption{A condensed tree diagram of the covering system given by \eqref{eq:example4}}
\label{fig:example3}
\end{figure}
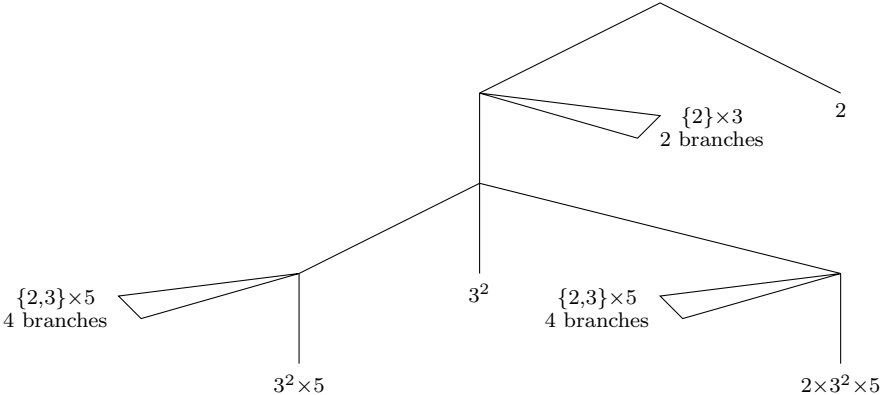

To illustrate a common technique used when building covering systems of the integers, we now turn our attention to a different system. Let $p>3$ be a prime and consider the following tree diagram.

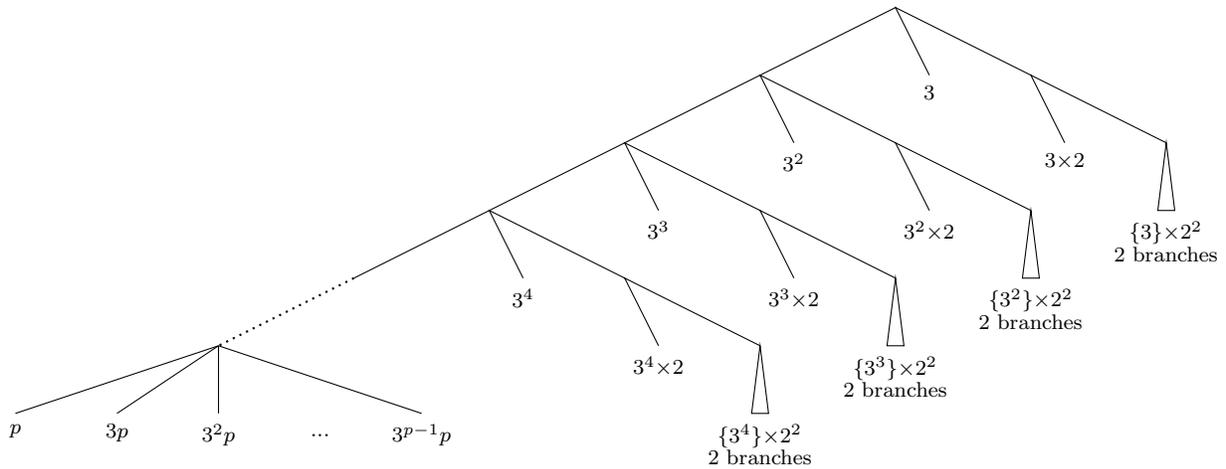
\begin{figure}[H]
\centering
\begin{tikzpicture}[scale=0.9]
\foreach\i in{0}{\draw(2-2*\i,-1-\i)--(-2*\i,-\i)--(0.5-2*\i,-1-\i);\tikzmath{\x=int(3^(\i+1));}\node[below]at(0.5-2*\i,-1-\i){$\substack{\x}$};
\draw(4-2*\i,-2-\i)--(2-2*\i,-1-\i)--(2.5-2*\i,-2-\i);\node[below]at(2.5-2*\i,-2-\i){$\substack{\x\times2}$};
\draw(4-2*\i,-2-\i)--(3.875-2*\i,-3-\i)--(4.125-2*\i,-3-\i)--cycle;\node[below]at(4-2*\i,-3-\i){$\substack{\{3\}\times2^2\\2\text{ branches}}$};}
\foreach[count=\i]\j in{2,3,4}{\draw(2-2*\i,-1-\i)--(-2*\i,-\i)--(0.5-2*\i,-1-\i);\node[below]at(0.5-2*\i,-1-\i){$\substack{3^\j}$};
\draw(4-2*\i,-2-\i)--(2-2*\i,-1-\i)--(2.5-2*\i,-2-\i);\node[below]at(2.5-2*\i,-2-\i){$\substack{3^\j\times2}$};
\draw(4-2*\i,-2-\i)--(3.875-2*\i,-3-\i)--(4.125-2*\i,-3-\i)--cycle;\node[below]at(4-2*\i,-3-\i){$\substack{\{3^\j\}\times2^2\\2\text{ branches}}$};}
\draw(0,0)--(-8,-4);\draw[dotted,thick](-8,-4)--(-10,-5);
\foreach\i in{0,1,2,4}{\draw(-10,-5)--(-13+1.5*\i,-6);}\node[below]at(-13,-6){$\substack{p}$};\node[below]at(-11.5,-6){$\substack{3p}$};\node[below]at(-10,-6){$\substack{3^2p}$};\node[below]at(-8.5,-6.1){$\substack{\dotsb}$};\node[below]at(-7,-6){$\substack{3^{p-1}p}$};
\end{tikzpicture}
\caption{A condensed tree diagram of a covering system whose smallest modulus is $3$}
\label{fig:highpower}
\end{figure}

In Figure~\ref{fig:highpower}, note that the structure of the subtree given in
Figure~\ref{fig:subtree} is repeated for $i\in\{1,2,\dotsc,p-1\}$. As we consistently split the leftmost node to attach a repeated subtree, we call the leftmost branch from the root a ``power branch" of the tree, and condense it as in Figure~\ref{fig:highpowercondensed}. To convert Figure~\ref{fig:highpowercondensed} back to Figure~\ref{fig:highpower}, the $i$-th repeated subtree is obtained by substituting every factor of $3$ in the first repeated subtree with $3^i$ for $2\leq i\leq p-1$.

\begin{figure}[H]
\centering
\begin{minipage}{0.45\textwidth}
\centering
\begin{tikzpicture}[scale=0.85]
\draw(2,-1)--(0,0)--(0.5,-1);\node[below]at(0.5,-1){$\substack{3^i}$};
\draw(4,-2)--(2,-1)--(2.5,-2);\node[below]at(2.5,-2){$\substack{3^i\times2}$};
\draw(4,-2)--(3.875,-3)--(4.125,-3)--cycle;\node[below]at(4,-3){$\substack{\{3^i\}\times2^2\\2\text{ branches}}$};
\end{tikzpicture}
\caption{Repeated subtree from Figure~\ref{fig:highpower}}
\label{fig:subtree}
\end{minipage}\qquad
\begin{minipage}{0.45\textwidth}
\centering
\begin{tikzpicture}[scale=0.85]
\draw(2,-1)--(0,0)--(0.5,-1);\node[below]at(0.5,-1){$\substack{3}$};
\draw(4,-2)--(2,-1)--(2.5,-2);\node[below]at(2.5,-2){$\substack{3\times2}$};
\draw(4,-2)--(3.875,-3)--(4.125,-3)--cycle;\node[below]at(4,-3){$\substack{\{3\}\times2^2\\2\text{ branches}}$};
\draw[ultra thick,-latex](0,0)--(-2,-1);\node[below]at(-2,-1){$\substack{3^2,3^3,\dotsc,3^{p-1}}$};
\end{tikzpicture}
\caption{A tree diagram using a ``power branch" notation}
\label{fig:highpowercondensed}
\end{minipage}
\end{figure}

We will use tree diagrams similar to Figure~\ref{fig:highpowercondensed} throughout the rest of this paper to denote various covering systems.

\section{Investigating Question~\ref{question:mainquestionsquarefree}}\label{sec:squarefree}

To begin our investigation for square-free covering systems of the integers, we first provide the following lemma, originally given by Hammer, Harrington, and Marotta \cite{hhm}.

\begin{lemma}\label{lem:shifted}
Let $p$ be a prime and let $r_1$ and $r_2$ be distinct integers with $0\leq r_1,r_2\leq p-1$. Suppose $\mathcal{C}$ is a covering system of the integers such that $\mod{r_1}{p}\in\mathcal{C}$ and $\mod{r_2}{p}\notin\mathcal{C}$. Then there exists a covering system $\mathcal{C}'$ with the same moduli as those of $\mathcal{C}$ such that $\mod{r_2}{p}\in\mathcal{C}'$ and $\mod{r_1}{p}\notin\mathcal{C}'$. Furthermore, if $\mod{r}{p}\in\mathcal{C}$ with $r\not\equiv\mod{r_1}{p}$, then $\mod{r}{p}\in\mathcal{C}'$.
\end{lemma}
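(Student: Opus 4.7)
The plan is to construct $\mathcal{C}'$ from $\mathcal{C}$ by a Chinese Remainder Theorem based residue shift that swaps the roles of $r_1$ and $r_2$ modulo $p$ while leaving every other residue class modulo $p$ untouched. Setting $d=r_2-r_1$, for each congruence $(r,m)\in\mathcal{C}$ with either $p\nmid m$, or with $p\mid m$ and $r\not\equiv r_1,r_2\pmod p$, I keep $(r,m)$ unchanged in $\mathcal{C}'$. For $(r,m)\in\mathcal{C}$ with $p\mid m$ and $r\equiv r_1\pmod p$, I replace it by the unique $(r^\sharp,m)$ with $r^\sharp\equiv r+d\pmod{p^{v_p(m)}}$ and $r^\sharp\equiv r\pmod{m/p^{v_p(m)}}$; symmetrically, I shift the $p$-part by $-d$ when $r\equiv r_2\pmod p$. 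By construction the multiset of moduli is preserved; the only congruence with modulus $p$ that the rule modifies is $(r_1,p)$, which is sent to $(r_2,p)$; no congruence of $\mathcal{C}$ maps to $(r_1,p)$ in $\mathcal{C}'$ (that would force $(r_2,p)\in\mathcal{C}$); and every $(r,p)\in\mathcal{C}$ with $r\neq r_1$ is left alone, which yields the \emph{Furthermore} clause.

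The heart of the argument is verifying that $\mathcal{C}'$ still covers $\mathbb{Z}$, which I would do by splitting cases on $n\bmod p$. If $n\bmod p\notin\{r_1,r_2\}$, every congruence of $\mathcal{C}$ that can cover $n$ is of a type left unchanged by the construction, so $n$ remains covered in $\mathcal{C}'$. If $n\equiv r_2\pmod p$, the new leaf $(r_2,p)\in\mathcal{C}'$ covers $n$ directly. The nontrivial case is $n\equiv r_1\pmod p$: for each modulus $m$ of a congruence in $\mathcal{C}$ with $p\mid m$ and residue $\equiv r_2\pmod p$, CRT produces a companion $n^*_m\equiv r_2\pmod{p^{v_p(m)}}$ agreeing with $n$ modulo $m/p^{v_p(m)}$; a direct computation then shows that such a congruence $(r,m)$ covers $n^*_m$ in $\mathcal{C}$ if and only if its shifted image covers $n$ in $\mathcal{C}'$. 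Thus any coverage of a companion $n^*_m$ by such a congruence translates into coverage of $n$ in $\mathcal{C}'$, and coverage of $n$ by a congruence with modulus coprime to $p$ is identical in $\mathcal{C}$ and $\mathcal{C}'$.

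The main obstacle is ruling out the scenario in which $n\equiv r_1\pmod p$ is covered in $\mathcal{C}$ essentially only through $(r_1,p)$, so that neither a congruence coprime to $p$ covers $n$, nor a congruence with residue $\equiv r_2\pmod p$ covers any companion $n^*_m$. I would handle this via the tree-diagram formalism of Section~\ref{sec:tree}: reorganize $\mathcal{C}$ so that the root of its tree splits on $p$, which forces the $r_2$-child to be a nontrivial subtree (since $(r_2,p)\notin\mathcal{C}$) whose leaves together cover the residue class $r_2\pmod p$. The CRT shift above is precisely the operation of transplanting that subtree to the $r_1$-child position, while inserting the new leaf $(r_2,p)$ in place of the old subtree. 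Because the transplanted subtree covers the residue class $r_1\pmod p$ after the shift, no integer $n\equiv r_1\pmod p$ can be missed in $\mathcal{C}'$, completing the verification.
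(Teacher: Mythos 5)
You should first note a peculiarity of the comparison: the paper does not prove Lemma~\ref{lem:shifted} at all — it is stated with a citation to Hammer, Harrington, and Marotta \cite{hhm} — so your argument can only be judged on its own terms. Your approach, swapping the residue classes $r_1$ and $r_2$ modulo $p$ by translating the $p$-part of each residue by $\pm d$ while leaving the prime-to-$p$ part and every modulus untouched, is the standard way to prove a statement of this kind, and its skeleton is sound: the bookkeeping for the multiset of moduli, for $(r_2,p)\in\mathcal{C}'$ and $(r_1,p)\notin\mathcal{C}'$, and for the ``Furthermore'' clause is all correct, as are the easy cases $n\not\equiv r_1,r_2\pmod p$ and $n\equiv r_2\pmod p$.

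Two things need tightening in the case $n\equiv r_1\pmod p$. First, the companion should be defined by $n^*_m\equiv n+d\pmod{p^{v_p(m)}}$ and $n^*_m\equiv n\pmod{m/p^{v_p(m)}}$, not by $n^*_m\equiv r_2\pmod{p^{v_p(m)}}$; with the latter, the asserted equivalence ``$(r,m)$ covers $n^*_m$ iff its shifted image covers $n$'' is false, since covering $n^*_m$ would then require $r\equiv r_2$ modulo the full power $p^{v_p(m)}$ rather than $r\equiv n+d$. Second, the ``main obstacle'' of your last paragraph should not be handled by reorganizing $\mathcal{C}$ into a tree rooted at a $p$-node: an arbitrary covering system carries no such structure, and refining it into one changes the congruences, so that step is not a well-defined operation on $\mathcal{C}$. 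The clean closure is already implicit in your own construction: let $A$ be the largest exponent with $p^A$ dividing some modulus, let $M$ be the least common multiple of all moduli, and choose a single global companion $n^*$ with $n^*\equiv n+d\pmod{p^A}$ and $n^*\equiv n\pmod{M/p^A}$. Since $\mathcal{C}$ covers $n^*$ and $n^*\equiv r_2\pmod p$, the congruence covering $n^*$ either has modulus coprime to $p$, in which case it also covers $n$ (as $n^*\equiv n$ modulo that modulus) and survives unchanged in $\mathcal{C}'$, or it has $p\mid m$ and residue $\equiv r_2\pmod p$, in which case $n^*\equiv n^*_m\pmod m$ and your (corrected) equivalence hands coverage of $n$ to its shifted image. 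This one witness rules out the scenario you were worried about and finishes the proof without any appeal to tree diagrams.
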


\begin{theorem}\label{thm:3sqrfreeimpliesodd}
Let $p\geq3$ be a prime. If there exists a covering system of the integers such that all moduli are odd, square-free, and distinct except that $p$ is used exactly twice as a modulus, then there exists an odd covering of the integers.
\end{theorem}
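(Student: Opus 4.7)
The plan splits into two sub-cases based on whether the two $p$-modulus congruences of $\mathcal{C}$ share the same residue modulo $p$. Write the two occurrences of modulus $p$ as $\mod{a}{p}$ and $\mod{b}{p}$. If $a \equiv b \pmod{p}$, then these two congruences are literally identical; removing one leaves a system whose moduli are odd, distinct, and greater than $1$, which still covers $\mathbb{Z}$. This is the desired odd covering and completes the easy sub-case.

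Suppose instead that $a \not\equiv b \pmod{p}$. The plan is to find a residue $c$ and a modification $\mathcal{C}^{*}$ of $\mathcal{C}$, obtained via Lemma~\ref{lem:shifted}, such that $\mathcal{C}^{*} \setminus \{\mod{c}{p}\}$ is still a covering of $\mathbb{Z}$. The reduced system would then have distinct odd moduli greater than $1$, yielding an odd covering. To search for such a pair $(c, \mathcal{C}^{*})$, I would apply Lemma~\ref{lem:shifted} to generate, for each $c \in \{0, 1, \dotsc, p-1\} \setminus \{b\}$, a covering $\mathcal{C}^{(c)}$ with the same moduli as $\mathcal{C}$ but with $\mod{a}{p}$ replaced by $\mod{c}{p}$ (while $\mod{b}{p}$ is retained). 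Symmetrically, one constructs $\mathcal{C}^{[c]}$ by swapping $\mod{b}{p}$ instead. The problem then reduces to the following claim: among these $2(p-1)$ candidate configurations, at least one permits the deletion of a $p$-modulus congruence while preserving the covering property; equivalently, for some candidate, the non-$p$-modulus congruences fully cover one of the residue classes mod $p$ occupied by a $p$-modulus congruence.

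Establishing the above claim is the main obstacle. Square-freeness of the moduli is essential here: each non-$p$-modulus congruence has either modulus coprime to $p$, contributing density $1/m$ uniformly across all $p$ residue classes mod $p$, or modulus $pn$ for some square-free $n > 1$ coprime to $p$, contributing density $1/n$ exclusively to the class determined by its residue. Summing these contributions across the $p$ residue classes and using that $\mathcal{C}$ is a covering yields an average-density lower bound; combined with Lemma~\ref{lem:shifted}'s flexibility to redistribute residues, a pigeonhole-style argument should identify a residue class on which the non-$p$-modulus coverage is complete. The subtle step is upgrading this density lower bound to a pointwise covering statement for the chosen class, which may require an explicit combinatorial construction leveraging the tree structure of $\mathcal{C}$ beyond Lemma~\ref{lem:shifted} alone.
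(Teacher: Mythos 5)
Your easy sub-case ($a\equiv\mod{b}{p}$) is fine, but the main sub-case contains a genuine gap, and the route you propose for it cannot close that gap. Lemma~\ref{lem:shifted} is only a relabelling of the residue classes modulo $p$: it exchanges the roles of the classes $r_1$ and $r_2$, carrying along every congruence whose modulus is divisible by $p$, and so it preserves which congruences are redundant. Consequently all $2(p-1)$ of your candidate configurations are equivalent for the purpose of deleting a modulus-$p$ congruence, and your key claim reduces to the assertion that one of the two copies of $p$ in the original system is already superfluous. That need not hold (the hypothesized covering may be tight), and if it did hold the theorem would be immediate; the entire content of the statement lies in the case where both copies of $p$ are essential, which your outline does not address. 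The density heuristic cannot rescue this: a covering only guarantees that the densities sum to at least $1$, and no pigeonhole argument upgrades an average-density bound to the pointwise statement that the non-$p$ congruences cover an entire residue class --- as you yourself concede.

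The idea you are missing is that the conclusion does not require the new covering to be square-free, so the two copies of $p$ can be traded for a tower of higher powers of $p$ rather than deleted. The paper normalizes (via Lemma~\ref{lem:shifted}) so that $\mod{0}{p}$ and $\mod{1}{p}$ are the two offending congruences. Square-freeness enters exactly once: for each $\xi\in\{2,\dotsc,p-1\}$, the congruences lying over the class $\mod{\xi}{p}$ have moduli divisible by $p$ exactly once, so dividing that factor out of their moduli and combining with the congruences whose moduli are coprime to $p$ yields a covering $\mathcal{C}_\xi$ of all of $\mathbb{Z}$, which can then be rescaled by any power of $p$. One then covers the class $\mod{1}{p}$ by modulus $p$; for each $i$, covers $\mod{p^i}{p^{i+1}}$ by modulus $p^{i+1}$ and each class $\mod{\xi\cdot p^i}{p^{i+1}}$ with $\xi\geq2$ by a copy of $\mathcal{C}_\xi$ rescaled to moduli $p^{i+1}m_{\xi,j}$; and finishes the residual class $\mod{0}{p^{q-1}}$ with an auxiliary odd prime $q$ not dividing $pm$, using moduli $p^iq$. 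All resulting moduli are odd, greater than $1$, and distinct. Without a construction of this kind (or some other genuine mechanism for eliminating a copy of $p$), your outline does not constitute a proof.
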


\begin{proof}
Suppose that $\mathcal{C}_0$ is a covering system of the integers such that all moduli are odd, square-free, and distinct except that $p$ is used exactly twice as a modulus. By Lemma~\ref{lem:shifted}, we may assume that $\mod{0}{p}$ and $\mod{1}{p}$ are congruences in $\mathcal{C}_0$. Let $k$ be a nonnegative integer and let $\ell_\xi$ be nonnegative integers for each $2\leq\xi\leq p-1$ such that
\begin{align*}
\mathcal{C}_0&=\{\mod{0}{p},\mod{1}{p}\}\cup\{\mod{r_j}{m_j}:1\leq j\leq k\}\cup\\
&\hspace{20pt}\bigcup_{\xi=2}^{p-1}\{\mod{r_{\xi,j}}{pm_{\xi,j}}:1\leq j\leq\ell_\xi\},
\end{align*}
where $p\nmid m_j$ for all $1\leq j\leq k$, and $p\nmid m_{\xi,j}$ and $r_{\xi,j}\equiv\mod{\xi}{p}$ for all $1\leq j\leq k_\xi$ and $2\leq\xi\leq p-1$.

We claim that for each $2\leq\xi\leq p-1$, the set of congruences 
$$\mathcal{C}_\xi=\{\mod{r_j}{m_j}:1\leq j\leq k\}\cup\{\mod{r_{\xi,j}}{m_{\xi,j}}:1\leq j\leq\ell_\xi\}$$
forms a covering system of the integers. To see this, consider any integer $r$. Let
$$m=\lcm\big(\{m_j:1\leq j\leq k\}\cup\{m_{\xi,j}:1\leq j\leq\ell_\xi\}\big)$$
and let $n$ be an integer that satisfies the congruence system
$$\begin{cases}
n\equiv\mod{\xi}{p},\\
n\equiv\mod{r}{m}.
\end{cases}$$
Since $\mathcal{C}_0$ is a covering system of the integers, $n$ must satisfy one of the congruences in
$$
\{\mod{r_j}{m_j}:1\leq j\leq k\}\cup\{\mod{r_{\xi,j}}{pm_{\xi,j}}:1\leq j\leq\ell_\xi\}.$$
As a result, if $n\equiv\mod{r_{\tilde{j}}}{m_{\tilde{j}}}$ for some $1\leq\tilde{j}\leq k$, then
$$r\equiv n\equiv\mod{r_{\tilde{j}}}{m_{\tilde{j}}};$$
if $n\equiv\mod{r_{\xi,\tilde{j}}}{pm_{\xi,\tilde{j}}}$ for some $1\leq\tilde{j}\leq\ell_\xi$, then
$$r\equiv n\equiv\mod{r_{\xi,\tilde{j}}}{m_{\xi,\tilde{j}}}.$$
This completes the proof of our claim.

Let $q$ be an odd prime such that $q\nmid pm$. For each $1\leq i\leq q-2$, $2\leq\xi\leq p-2$, and $1\leq j\leq\ell_\xi$, let $r_{\xi,i,j}$ be an integer that satisfies the congruence system
$$\begin{cases}
r_{\xi,i,j}\equiv\mod{\xi\cdot p^i}{p^{i+1}},\\
r_{\xi,i,j}\equiv\mod{r_{\xi,j}}{m_{\xi,j}}.
\end{cases}$$
Moreover, for each $0\leq i\leq q-1$, let $s_i$ be an integer that satisfies the congruence system
$$\begin{cases}
s_i\equiv\mod{0}{p^i},\\
s_i\equiv\mod{i}{q}.
\end{cases}$$
Now, we are ready to build an odd covering system based on the existence of $\mathcal{C}_0$. Consider
\begin{align*}
\mathcal{C}&=\{\mod{p^i}{p^{i+1}}:0\leq i\leq q-2\}\cup\{\mod{s_i}{p^iq}:0\leq i\leq q-1\}\\
&\hspace{20pt}\cup\{\mod{r_j}{m_j}:1\leq j\leq k\}\cup\bigcup_{\xi=2}^{p-1}\{\mod{r_{\xi,i,j}}{p^{i+1}m_{\xi,j}}:0\leq i\leq q-2,1\leq j\leq\ell_\xi\}.
\end{align*}

It is easy to see that all moduli of $\mathcal{C}$ are odd and distinct. To show that $\mathcal{C}$ is a covering system of the integers, let $N$ be an arbitrary integer, and let $i_0$ be the maximum integer such that $p^{i_0}\mid N$. If $0\leq i_0\leq q-2$, then either $N\equiv\mod{p^{i_0}}{p^{i_0+1}}$, which is trivially covered by $\mathcal{C}$, or $N\equiv\mod{\xi\cdot p^{i_0}}{p^{i_0+1}}$ for some $2\leq\xi\leq p-1$, which is covered by 
$$\{\mod{r_j}{m_j}:1\leq j\leq k\}\cup\{\mod{r_{\xi,i_0,j}}{p^{i_0+1}m_{\xi,j}}:1\leq j\leq\ell_\xi\}$$
since $C_\xi$ is a covering system of the integers. Lastly, if $i_0\geq q-1$, then $N$ is covered by $\{\mod{s_i}{p^iq}:0\leq i\leq q-1\}$.
\end{proof}

\begin{remark}
The proof of Theorem~\ref{thm:3sqrfreeimpliesodd} can be summarized as a transformation between two tree diagrams. Consider a tree diagram for $\mathcal{C}_0$ as shown in Figure~\ref{fig:C0}. In this tree diagram, the root is a $p$-node, and for each $i\in\{2,3,\dotsc,p-1\}$, $T_i$ denotes the subtree below the node that corresponds to the congruence $\mod{i}{p}$. If we replace the leftmost branch by a power branch $p^2,p^3,\dotsc,p^{q-1}$, as shown in Figure~\ref{fig:C}, then the resultant tree diagram represents a covering system $\mathcal{C}$ with odd and distinct moduli that are greater than $1$.
\begin{figure}[H]
\centering
\begin{minipage}{0.48\textwidth}
\centering
\begin{tikzpicture}[scale=1.2]
\foreach[count=\j]\i in{-2,-1,0,1,3}{\draw(0,0)--(\i,-1);}\draw[very thick,loosely dotted](1.5,-1)--(2.5,-1);
\foreach[count=\j]\i in{-2,-1}{\node[below]at(\i,-1){$\substack{p}$};}\node[below]at(0,-1){$\substack{T_2}$};\node[below]at(1,-1){$\substack{T_3}$};\node[below]at(3,-1){$\substack{T_{p-1}}$};
\end{tikzpicture}
\caption{The root of a tree diagram for $\mathcal{C}_0$}
\label{fig:C0}
\end{minipage}
\begin{minipage}{0.48\textwidth}
\centering
\begin{tikzpicture}[scale=1.2]
\draw[ultra thick,-latex](0,0)--(-2,-1);\foreach[count=\j]\i in{-1,0,1,3}{\draw(0,0)--(\i,-1);}\draw[very thick,loosely dotted](1.5,-1)--(2.5,-1);
\node[below]at(-2.1,-1){$\substack{p^2,p^3,\dotsc,p^{q-1}}$};
\node[below]at(-1,-1){$\substack{p}$};\node[below]at(0,-1){$\substack{T_2}$};\node[below]at(1,-1){$\substack{T_3}$};\node[below]at(3,-1){$\substack{T_{p-1}}$};
\end{tikzpicture}
\caption{The root of a tree diagram for $\mathcal{C}$}
\label{fig:C}
\end{minipage}
\end{figure}
\end{remark}

We note that Theorem~\ref{thm:3sqrfreeimpliesodd} can be restated as follows: if $\tau_p\leq2$ for any prime $p\geq3$, then there is an odd covering system of the integers. Hence, to investigate the existence of an odd covering system, one possible approach is to give bounds on $\tau_p$ for an odd prime $p$. In the following, we show that $\tau_7\leq6$.

\begin{theorem}\label{thm:tau7}
There exists a covering system of the integers such that all moduli are odd, square-free, and distinct except that $7$ is used exactly six times as a modulus.  
\end{theorem}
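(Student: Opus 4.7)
My plan is to exhibit an explicit covering system, presented in the form of a condensed tree diagram in the style of Section~\ref{sec:tree}. The root of the diagram will be a $7$-node, and six of its seven child branches will terminate immediately as leaves labeled $7$; this accounts for exactly the six prescribed uses of the modulus $7$. By Lemma~\ref{lem:shifted}, I may arrange these six leaves to cover any six chosen residue classes modulo $7$, leaving a single residue class (say $0$ modulo $7$) to be handled by the subtree under the remaining branch.

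The subtree must cover this single residue class modulo $7$ using odd, square-free moduli that are pairwise distinct and never equal to $7$. The intended construction proceeds by layered prime splitting: a $3$-node directly below the remaining branch of the root, then $5$-nodes and $11$-nodes lower down, with further small odd primes introduced as needed. Each leaf is assigned a label equal to some square-free product of primes drawn from $\{3,5,11,13,\dotsc\}$, optionally multiplied by $7$. Two types of leaf label play complementary roles: a label $m$ coprime to $7$ covers $1/m$ of all integers (intersecting the target class in density $1/(7m)$) and efficiently trims away a big fraction of the target while harmlessly over-covering classes already handled by the six copies of $7$ at the top; a label of the form $7m$ with $\gcd(m,7)=1$ covers exactly the portion $1/(7m)$ of the integers that lies inside the target class, and is well suited to the deeper leaves.

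The main obstacle is the distinctness constraint: aside from the six permitted copies of $7$, no other modulus may appear twice. Every new level of branching must therefore either use still-unused square-free labels among the primes already present in the path or introduce a brand new prime into the tree so that a fresh supply of labels becomes available. Leaves that cannot be assigned a previously unused label must be split further by attaching another prime node, and the construction keeps a sufficient stock of small primes (for example $3,5,11,13,17,19,23$) on hand to guarantee that the process terminates with every leaf capped off by a distinct label.

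The verification step is comparatively easy. Once the tree has been drawn and every leaf labeled, the tree-diagram framework of Section~\ref{sec:tree} guarantees that the resulting system covers all integers: the branches at every internal node partition the integers passing through that node, and each leaf's chosen label yields a congruence class that is a superset of the corresponding partition block. It then only remains to tally the leaf labels and confirm that they are all odd, all square-free, and all pairwise distinct apart from the six leaves labeled $7$ at the top, at which point the proof is complete.
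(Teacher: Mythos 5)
Your proposal correctly identifies the overall shape of the argument---the paper's proof is indeed a tree whose root is a $7$-node, with six child branches capped immediately by the modulus $7$ and the remaining residue class handled by a subtree whose leaves carry square-free products of $3,5,11,13,17,19,23$, optionally times $7$. But the proposal stops exactly where the mathematical content begins. The theorem is an existence statement, and its proof is a witness: an explicit, checkable covering system. You never produce one. No tree is actually drawn, no leaf labels are listed, and no counts are verified. The step you yourself flag as ``the main obstacle''---ensuring that at every internal $r$-node the supply of still-unused odd square-free labels is at least $r$, and that the recursion closes off after finitely many levels---is precisely what must be demonstrated, and you dispose of it by asserting that one can ``keep a sufficient stock of small primes on hand to guarantee that the process terminates.'' There is no a priori guarantee of this: introducing a new prime $r$ at a node creates $r$ fresh branches to cover, while only the $2^k$ square-free products of the $k$ primes already on the path (times $r$) are available as labels, many of which may already be spent elsewhere in the tree. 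Whether the books balance is a delicate quantitative question; if such a greedy process could always be made to terminate, the square-free odd covering problem itself would essentially be resolved, contradicting the fact that Theorem~\ref{thm:3sqrfreeimpliesodd} shows $\tau_p\leq 2$ already implies an odd covering exists.

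The paper's proof is the explicit diagram of Figure~\ref{fig:six7s}, in which, for example, a $13$-node carries the wedge $\{7,3,5,11\}\times 13$ with exactly $16$ available moduli for $13$ branches, and a $23$-node carries $\{7,3,5,11,19\}\times 23$ with $32$ available moduli for $23$ branches; exhibiting some such concrete configuration and tallying its leaves \emph{is} the proof, and it is absent from your write-up. (A minor further point: the appeal to Lemma~\ref{lem:shifted} is unnecessary here, since in constructing a covering from scratch one may simply assign the six copies of $7$ to six residues of one's choosing.)
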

\begin{proof}
Figure~\ref{fig:six7s} shows a tree diagram for a covering system of the integers such that all moduli are odd, square-free, and distinct except that $7$ is used exactly six times as a modulus. In the tree diagram, note that at the wedge $\{7,3,5,11\}\times13$, there are $16$ available moduli according to the discussions in Section~\ref{sec:tree}, but there are only $13$ branches since it is a $13$-node. Thus, we are going to choose the $13$ smallest moduli given by the product of subsets of $\{7,3,5,11\}$ with $13$ among the child branches. Similar situations occurs at a few other wedges, as indicated in the figure.
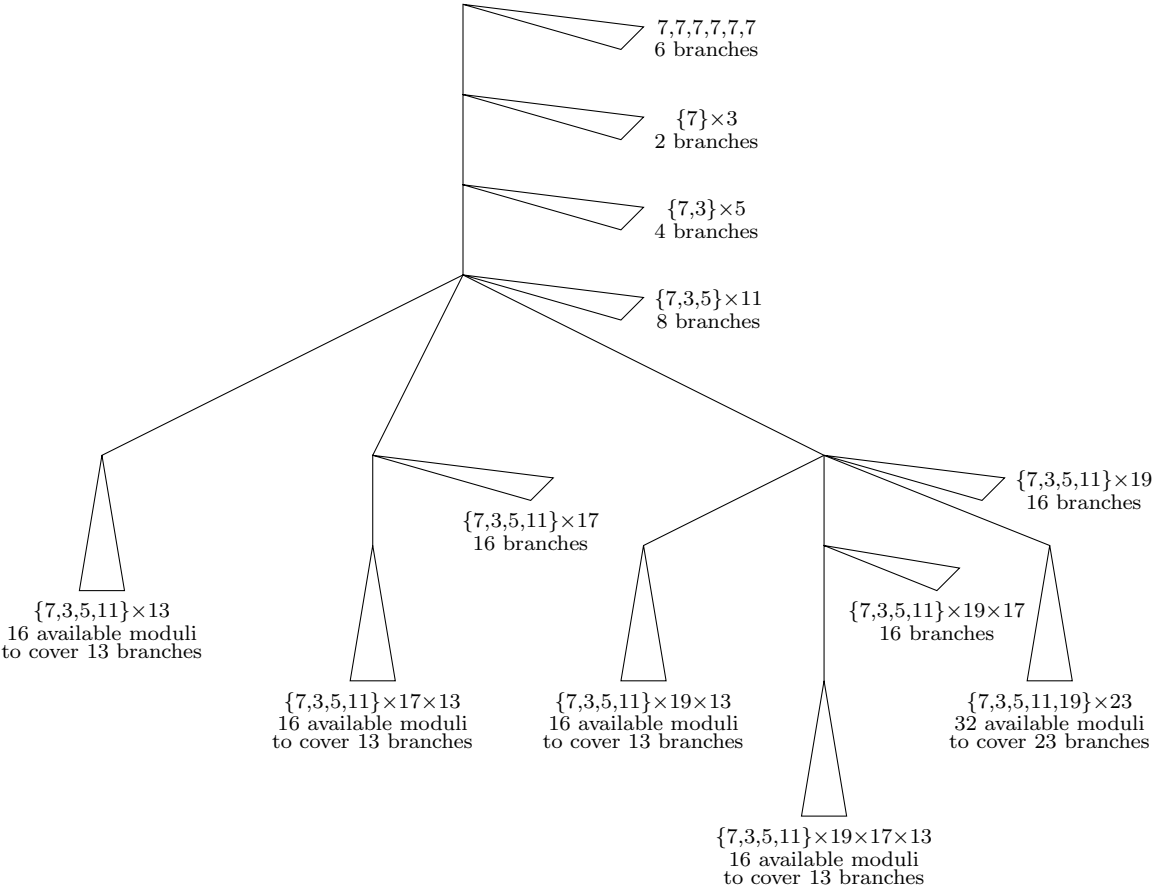
\begin{figure}[H]
\centering
\begin{tikzpicture}[scale=1.2]
\draw(0,0)--(2,-0.25)--(1.75,-0.5)--(0,0)--(0,-1)--(2,-1.25)--(1.75,-1.5)--(0,-1)--(0,-2)--(2,-2.25)--(1.75,-2.5)--(0,-2)--(0,-3)--(2,-3.25)--(1.75,-3.5)--(0,-3)--(-4,-5)--(-4.25,-6.5)--(-3.75,-6.5)--(-4,-5);
\draw(0,-3)--(-1,-5)--(1,-5.25)--(0.75,-5.5)--(-1,-5)--(-1,-6)--(-1.25,-7.5)--(-0.75,-7.5)--(-1,-6);
\draw(0,-3)--(4,-5)--(6,-5.25)--(5.75,-5.5)--(4,-5)--(2,-6)--(1.75,-7.5)--(2.25,-7.5)--(2,-6);
\draw(4,-5)--(4,-6)--(5.5,-6.25)--(5.25,-6.5)--(4,-6)--(4,-7.5)--(3.75,-9)--(4.25,-9)--(4,-7.5);
\draw(4,-5)--(6.5,-6)--(6.25,-7.5)--(6.75,-7.5)--(6.5,-6);
\node[right]at(2,-0.375){$\substack{7,7,7,7,7,7\\6\text{ branches}}$};
\node[right]at(2,-1.375){$\substack{\{7\}\times3\\2\text{ branches}}$};
\node[right]at(2,-2.375){$\substack{\{7,3\}\times5\\4\text{ branches}}$};
\node[right]at(2,-3.375){$\substack{\{7,3,5\}\times11\\8\text{ branches}}$};
\node[below]at(-4,-6.5){$\substack{\{7,3,5,11\}\times13\\16\text{ available moduli}\\\text{to cover }13\text{ branches}}$};
\node[right]at(6,-5.375){$\substack{\{7,3,5,11\}\times19\\16\text{ branches}}$};
\node[below]at(-1,-7.5){$\substack{\{7,3,5,11\}\times17\times13\\16\text{ available moduli}\\\text{to cover }13\text{ branches}}$};
\node[below]at(0.75,-5.5){$\substack{\{7,3,5,11\}\times17\\16\text{ branches}}$};
\node[below]at(2,-7.5){$\substack{\{7,3,5,11\}\times19\times13\\16\text{ available moduli}\\\text{to cover }13\text{ branches}}$};
\node[below]at(5.25,-6.5){$\substack{\{7,3,5,11\}\times19\times17\\16\text{ branches}}$};
\node[below]at(4,-9){$\substack{\{7,3,5,11\}\times19\times17\times13\\16\text{ available moduli}\\\text{to cover }13\text{ branches}}$};
\node[below]at(6.5,-7.5){$\substack{\{7,3,5,11,19\}\times23\\32\text{ available moduli}\\\text{to cover }23\text{ branches}}$};
\end{tikzpicture}
\caption{Square-free odd covering with $7$ used exactly six times as a modulus}
\label{fig:six7s}
\end{figure}
\end{proof}

\section{Investigating Question~\ref{question:mainquestion}}\label{sec:main}

In this section, we establish upper bounds for $t_p$ for all odd primes $p$.  We begin by showing that $t_7\leq 4.$

\begin{theorem}\label{thm:t7}
There exists a covering system of the integers such that all moduli are odd and distinct except that $7$ is used exactly four times as a modulus.
\end{theorem}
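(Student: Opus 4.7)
My plan is to prove Theorem~\ref{thm:t7} by explicitly constructing an odd covering of the integers with the prescribed use of $7$ and presenting it via a tree diagram in the notation of Section~\ref{sec:tree}, closely mirroring the square-free construction of Theorem~\ref{thm:tau7}. The root of the tree will be a $7$-node, splitting the integers into the seven residue classes modulo $7$; four of these seven child branches will be leaves labeled $7$, accounting for all four prescribed uses of $7$ as a modulus. The three remaining residue classes must then be covered by three subtrees using moduli that are odd, distinct from $7$, and pairwise distinct across the whole tree.

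Because moduli are no longer required to be square-free, the key new ingredient relative to Theorem~\ref{thm:tau7} is the power-branch construction of Figure~\ref{fig:highpowercondensed}. For one of the three remaining branches I plan to install a $7$-power branch $7^2, 7^3, \dotsc, 7^{q-1}$ for a suitable prime $q$, closing off the chain with a wedge involving $q$. Along such a chain, each $7^i$-node is resolved by a wedge of small odd primes of the form $\{3\}\times 5$, $\{3,5\}\times 11, \dotsc$, chosen so that every wedge offers at least as many available moduli as the branching factor of its parent node. For the other two remaining branches I expect analogous power-branch constructions rooted in a different prime such as $3$ or $5$, again populated with wedges drawn from a single global pool of small odd primes $3, 5, 11, 13, 17, 19, 23, \dotsc$. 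Distinctness of moduli will then follow from using each small prime at most once as a pure leaf, together with the observation that distinct subsets of the prime pool yield distinct wedge products.

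The main obstacle is precisely this global bookkeeping: across all three subtrees we must ensure that no modulus outside the four copies of $7$ is repeated, and that every wedge has enough available moduli to populate its parent $q$-node. Coverage itself is automatic once the tree is fully specified, because at every internal node the child branches exhaust the residue classes modulo the appropriate prime power. I expect the most delicate step to be the explicit placement of the power branches and the verification of distinctness, best presented through a figure analogous to Figure~\ref{fig:six7s} with a caption indicating which wedges provide more available moduli than child branches. Once such a figure is exhibited and the moduli are tabulated and checked for distinctness, the theorem follows.
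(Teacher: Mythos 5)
Your outline matches the architecture of the paper's proof: a root $7$-node with four of its seven children left as leaves of modulus $7$, the remaining three residue classes handled by subtrees built from power branches and wedges of small odd primes, with the power-branch device of Figure~\ref{fig:highpowercondensed} supplying the non-square-free moduli that distinguish this from Theorem~\ref{thm:tau7}. However, there is a genuine gap: you never exhibit the covering system. Theorem~\ref{thm:t7} is a pure existence statement whose entire proof content \emph{is} the explicit construction; the paper devotes Figures~\ref{fig:four7s}--\ref{fig:four7sT6} to specifying six interlocking auxiliary subtrees $T_1,\dotsc,T_6$, with power branches in $3,5,7,11,13,17,19$ and an auxiliary prime $q>19$, arranged so that the wedges $\{7,3\}\times5$, $\{7,3,5\}\times11\times17$, $\{3,5\}\times7^2\times13$, etc., supply exactly the needed numbers of pairwise distinct moduli. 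Writing ``I expect analogous power-branch constructions'' and ``once such a figure is exhibited \dots the theorem follows'' defers precisely the step you yourself identify as the delicate one, and that step is where such constructions typically fail: each prime can contribute the bare modulus $p$ only once across the whole tree, each wedge must offer at least as many available moduli as the branching factor of its parent node, and no product may recur in two different subtrees. Nothing in your proposal certifies that a consistent global assignment exists.

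A secondary point: your claim that ``coverage itself is automatic'' is too quick. Coverage at each node requires that the children exhaust the residue classes modulo the relevant prime power, and in particular each power branch $p^2,p^3,\dotsc,p^{q-1}$ must be properly terminated --- the residue class of integers divisible by $p^{q-1}$ has to be caught by the terminal $q$-node of the chain, as in Figure~\ref{fig:highpower} --- which imposes additional distinct moduli involving $q$ that must also be folded into the distinctness bookkeeping. To close the gap you would need to produce the full tree (or an equivalent explicit list of congruences) and verify both exhaustion at every node and distinctness of all moduli other than the four copies of $7$.
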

\begin{proof}
The tree diagram of this covering system is given by Figure~\ref{fig:four7s}, where subtrees $T_1$ to $T_6$ are given in Figures~\ref{fig:four7sT1} to \ref{fig:four7sT6}, respectively. Here, $q>19$ is a prime.

Please take special note in the power branch $7^3,7^4,\dotsc,7^{q-1}$ in Figure~\ref{fig:four7sT2}. Here, the repeated subtrees are obtained by substituting every factor of $7^2$ (but not $7$) in the first repeated subtree with $7^i$ for $3\leq i\leq q-1$.

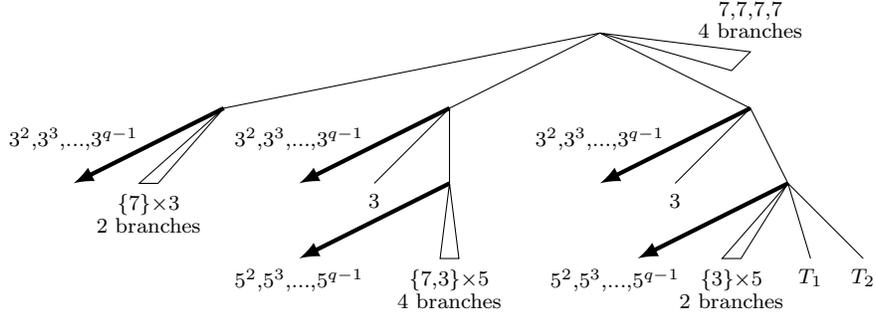
\begin{figure}[H]
\centering
\begin{tikzpicture}[scale=1]
\draw(0,0)--(2,-0.25)--(1.75,-0.5)--(0,0);
\node[above]at(2,-0.2){$\substack{7,7,7,7\\4\text{ branches}}$};
\draw(0,0)--(-5,-1)--(-6.125,-2)--(-5.875,-2)--(-5,-1);\draw[ultra thick,-latex](-5,-1)--(-7,-2);
\node[above]at(-7,-1.7){$\substack{3^2,3^3,\dotsc,3^{q-1}}$};\node[below]at(-6,-2){$\substack{\{7\}\times3\\2\text{ branches}}$};

\draw(0,0)--(-2,-1)--(-3,-2);\draw[ultra thick,-latex](-2,-1)--(-4,-2);
\node[above]at(-4,-1.7){$\substack{3^2,3^3,\dotsc,3^{q-1}}$};\node[below]at(-3,-2){$\substack{3}$};
\draw(-2,-1)--(-2,-2)--(-2.125,-3)--(-1.875,-3)--(-2,-2);\draw[ultra thick,-latex](-2,-2)--(-4,-3);
\node[below]at(-4,-3){$\substack{5^2,5^3,\dotsc,5^{q-1}}$};\node[below]at(-2,-3){$\substack{\{7,3\}\times5\\4\text{ branches}}$};

\draw(0,0)--(2,-1)--(1,-2);\draw[ultra thick,-latex](2,-1)--(0,-2);
\node[above]at(0,-1.7){$\substack{3^2,3^3,\dotsc,3^{q-1}}$};\node[below]at(1,-2){$\substack{3}$};
\draw(2,-1)--(2.5,-2)--(1.625,-3)--(1.875,-3)--(2.5,-2);\draw[ultra thick,-latex](2.5,-2)--(0.5,-3);
\node[below]at(0.2,-3){$\substack{5^2,5^3,\dotsc,5^{q-1}}$};\node[below]at(1.75,-3){$\substack{\{3\}\times5\\2\text{ branches}}$};
\draw(2.5,-2)--(2.8,-3);\draw(2.5,-2)--(3.5,-3);\node[below]at(2.8,-3){$\substack{T_1}$};\node[below]at(3.5,-3){$\substack{T_2}$};
\end{tikzpicture}
\caption{Odd covering with $7$ used exactly four times as a modulus}
\label{fig:four7s}
\end{figure}

\begin{figure}[H]
\centering
\begin{tikzpicture}[scale=0.98]
\draw[ultra thick,-latex](0,0)--(-8,-1);\node[below]at(-8.3,-1){$\substack{11^2,11^3,\dotsc,11^{q-1}}$};
\draw(0,0)--(-6.5,-1)--(-6,-1)--(0,0);\node[below]at(-6.25,-1){$\substack{\{7,3\}\times11\\4\text{ branches}}$};
\draw(0,0)--(-4.5,-1)--(-4,-1)--(0,0);\node[below]at(-4.25,-1){$\substack{\{7,3\}\times5\times11\\4\text{ branches}}$};
\draw(0,0)--(-2,-1);\draw[ultra thick,-latex](-2,-1)--(-4,-2);\node[below]at(-4.7,-2){$\substack{17^2,17^3,\dotsc,17^{q-1}}$};
\draw(-2,-1)--(-2.625,-2)--(-2.375,-2)--(-2,-1);\node[below]at(-2.5,-2){$\substack{\{7,3,5\}\times17\\8\text{ branches}}$};
\draw(-2,-1)--(-0.625,-2)--(-0.375,-2)--(-2,-1);\node[below]at(-0.5,-2){$\substack{\{7,3,5\}\times11\times17\\8\text{ branches}}$};
\draw(0,0)--(3,-1);\draw[ultra thick,-latex](3,-1)--(1,-2);\node[above]at(0.9,-1.7){$\substack{19^2,19^3,\dotsc,19^{q-1}}$};
\draw(3,-1)--(2.375,-2)--(2.625,-2)--(3,-1);\node[below]at(2.5,-2){$\substack{\{7,3,5\}\times19\\8\text{ branches}}$};
\draw(3,-1)--(4.375,-2)--(4.625,-2)--(3,-1);\node[below]at(4.5,-2){$\substack{\{7,3,5\}\times11\times19\\8\text{ branches}}$};
\draw(3,-1)--(6,-2);\node[below]at(6,-2){$\substack{T_3}$};
\draw(3,-1)--(7,-2);\node[below]at(7,-2){$\substack{T_4}$};
\end{tikzpicture}
\caption{$T_1$ in Figure~\ref{fig:four7s}}
\label{fig:four7sT1}
\end{figure}

\begin{figure}[H]
\centering
\begin{minipage}{0.48\textwidth}
\centering
\begin{tikzpicture}[scale=1]
\draw[ultra thick,-latex](0,0)--(-2,-1);\node[above]at(-2.4,-0.8){$\substack{17^2,17^3,\dotsc,17^{q-1}}$};
\draw(0,0)--(-1.125,-1)--(-0.875,-1)--(0,0);\node[below]at(-1,-1){$\substack{\{7,3,5\}\times17\\8\text{ branches}}$};
\draw(0,0)--(0.575,-1)--(0.825,-1)--(0,0);\node[below]at(0.7,-1){$\substack{\{7,3,5\}\\\times19\times17\\8\text{ branches}}$};
\end{tikzpicture}
\caption{$T_3$ in Figure~\ref{fig:four7sT1}}
\label{fig:four7sT3}
\end{minipage}
\begin{minipage}{0.48\textwidth}
\centering
\begin{tikzpicture}[scale=0.92]
\draw[ultra thick,-latex](0,0)--(-2,-1);\node[above]at(-2.4,-0.8){$\substack{17^2,17^3,\dotsc,17^{q-1}}$};
\draw(0,0)--(-1.125,-1)--(-0.875,-1)--(0,0);\node[below]at(-1,-1){$\substack{\{7,3,5\}\times17\\8\text{ branches}}$};
\draw(0,0)--(0.875,-1)--(1.125,-1)--(0,0);\node[below]at(1,-1){$\substack{\{7,3,5\}\\\times11\times19\times17\\8\text{ branches}}$};
\end{tikzpicture}
\caption{$T_4$ in Figure~\ref{fig:four7sT1}}
\label{fig:four7sT4}
\end{minipage}
\end{figure}

\begin{figure}[H]
\centering
\begin{tikzpicture}[scale=0.98]
\draw[ultra thick,-latex](0,0)--(-8,-1);\node[below]at(-8.3,-1){$\substack{7^3,7^4,\dotsc,7^{q-1}}$};
\draw(0,0)--(-6.5,-1)--(-6,-1)--(0,0);\node[below]at(-6.25,-1){$\substack{\{3,5\}\times7^2\\4\text{ branches}}$};

\draw(0,0)--(-4,-1);\draw[ultra thick,-latex](-4,-1)--(-6,-2);\node[below]at(-6.6,-2){$\substack{13^2,13^3,\dotsc,13^{q-1}}$};
\draw(-4,-1)--(-4.625,-2)--(-4.375,-2)--(-4,-1);\node[below]at(-4.5,-2){$\substack{\{7,3,5\}\times13\\8\text{ branches}}$};
\draw(-4,-1)--(-2.625,-2)--(-2.375,-2)--(-4,-1);\node[below]at(-2.5,-2){$\substack{\{3,5\}\times7^2\times13\\4\text{ branches}}$};

\draw(0,0)--(1,-1);\draw[ultra thick,-latex](1,-1)--(-1,-2);\node[above]at(-1.2,-1.7){$\substack{11^2,11^3,\dotsc,11^{q-1}}$};
\draw(1,-1)--(0.375,-2)--(0.625,-2)--(1,-1);\node[below]at(0.5,-2){$\substack{\{7,3\}\times11\\4\text{ branches}}$};
\draw(1,-1)--(2.375,-2)--(2.625,-2)--(1,-1);\node[below]at(2.5,-2){$\substack{\{3,5\}\times7^2\times11\\4\text{ branches}}$};
\draw(1,-1)--(4,-2);\node[below]at(4,-2){$\substack{T_5}$};
\draw(1,-1)--(4.7,-2);\node[below]at(4.7,-2){$\substack{T_6}$};




\end{tikzpicture}
\caption{$T_2$ in Figure~\ref{fig:four7s}}
\label{fig:four7sT2}
\end{figure}

\begin{figure}[H]
\centering
\begin{minipage}{0.48\textwidth}
\centering
\begin{tikzpicture}[scale=1]
\draw[ultra thick,-latex](0,0)--(-2,-1);\node[above]at(-2.4,-0.8){$\substack{13^2,13^3,\dotsc,13^{q-1}}$};
\draw(0,0)--(-0.625,-1)--(-0.375,-1)--(0,0);\node[below]at(-0.5,-1){$\substack{\{7,3,5\}\times13\\8\text{ branches}}$};
\draw(0,0)--(1.375,-1)--(1.625,-1)--(0,0);\node[below]at(1.5,-1){$\substack{\{7,3\}\times11\times13\\4\text{ branches}}$};
\end{tikzpicture}
\caption{$T_5$ in Figure~\ref{fig:four7sT2}}
\label{fig:four7sT5}
\end{minipage}
\begin{minipage}{0.48\textwidth}
\centering
\begin{tikzpicture}[scale=0.92]
\draw[ultra thick,-latex](0,0)--(-2,-1);\node[above]at(-2.4,-0.8){$\substack{13^2,13^3,\dotsc,13^{q-1}}$};
\draw(0,0)--(-0.625,-1)--(-0.375,-1)--(0,0);\node[below]at(-0.5,-1){$\substack{\{7,3,5\}\times13\\8\text{ branches}}$};
\draw(0,0)--(1.875,-1)--(2.125,-1)--(0,0);\node[below]at(2,-1){$\substack{\{7,3\}\times5\times11\times13\\4\text{ branches}}$};
\end{tikzpicture}
\caption{$T_6$ in Figure~\ref{fig:four7sT2}}
\label{fig:four7sT6}
\end{minipage}
\end{figure}
\end{proof}

Next, we show that $t_{11}\leq 7$.

\begin{theorem}\label{thm:t11}
There exists a covering system of the integers such that all moduli are odd and distinct except that $11$ is used exactly seven times as a modulus.
\end{theorem}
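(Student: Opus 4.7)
The plan is to mirror the approach of Theorem~\ref{thm:t7} by exhibiting an explicit covering system of $\Z$ whose condensed tree diagram has an $11$-node at the root. Of the eleven child branches of this root, seven are taken to be leaves labeled with the modulus $11$, accounting for the seven permitted uses; the remaining four child branches each lead to a subtree responsible for one of the four residue classes modulo $11$ that are not yet covered by the direct copies.

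Inside each of those four residual subtrees, I would layer the two standard devices established in Section~\ref{sec:tree}. First, wedges of the form $\{p_1,p_2,\ldots,p_r\}\times m$ provide up to $2^r$ pairwise distinct square-free moduli sharing the common factor $m$; these would be built from small odd primes such as $3, 5, 7, 13, 17, 19, 23$, typically combined with a factor of $11$, in the same spirit that $\{7,3,5\}\times 11$ and $\{7,3\}\times 11$ appeared in Theorem~\ref{thm:t7}. Second, power branches $p^2, p^3, \ldots, p^{q-1}$ for a sufficiently large prime $q$ (fixed at the end) close off the iterative splitting of high powers of each chosen small prime $p$. The tree would alternate $3$-nodes, $5$-nodes, $7$-nodes, and possibly deeper $11$-nodes, each of whose wedges supplies exactly the required number of distinct children.

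Correctness would then follow from two routine checks. For coverage, induction on the depth of the tree shows that each $p$-node truly partitions its parent residue class into $p$ subclasses, so every integer reaches some leaf and satisfies the corresponding congruence. For distinctness of the non-$11$ moduli, one inspects each wedge and each power branch in turn and confirms that the square-free products generated are new to the diagram; the condensed tree layout is organized precisely to make this inspection transparent.

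The main obstacle will be the combinatorial bookkeeping. Whenever an $11$-node appears deeper in the tree, it needs eleven distinct children, and a single wedge $\{p_1,\ldots,p_r\}\times m$ only supplies $2^r$ distinct moduli, which forces the introduction of several new small primes above each such node. Orchestrating the subtrees so that every internal $p$-node is legally filled, every leaf ultimately terminates through a power branch, and no modulus other than $11$ itself ever repeats, is the crux of the construction. This is exactly the engineering that was carried out for the $t_7\le 4$ bound, and the larger branching number at the root prime makes the bookkeeping correspondingly more intricate; I would expect to need three or four nested levels of subtrees (analogous to the $T_1,\ldots,T_6$ in Theorem~\ref{thm:t7}) before every residual residue class can be sent into a power branch.
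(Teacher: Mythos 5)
Your proposal correctly identifies the shape of the argument --- an $11$-node at the root with seven modulus-$11$ leaves and four residual subtrees assembled from wedges and power branches, which is exactly the architecture of Figure~\ref{fig:seven11s} --- but it stops precisely where the proof has to begin. The entire mathematical content of Theorem~\ref{thm:t11} is the explicit construction: which auxiliary primes to introduce, which wedges $\{p_1,\dotsc,p_r\}\times m$ to attach at which nodes, where the power branches $p^2,p^3,\dotsc,p^{q-1}$ go, and the node-by-node verification that every internal $p$-node receives exactly $p$ children while no modulus other than $11$ is ever repeated anywhere in the tree. You explicitly defer all of this (``orchestrating the subtrees \dots is the crux of the construction''), so what you have written is a plan for a proof rather than a proof. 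Nothing guarantees in advance that the bookkeeping closes up with only seven copies of $11$; making it close up \emph{is} the theorem, and the assertion that correctness ``would then follow from two routine checks'' presupposes the object whose existence is being claimed.

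For comparison, the paper's proof consists of the condensed tree in Figure~\ref{fig:seven11s} together with the subtrees $T_1,\dotsc,T_4$ of Figures~\ref{fig:seven11sT1}--\ref{fig:seven11sT4}: the four residue classes modulo $11$ not handled by the seven direct copies are each entered through a $3$-node and a $5$-node (with power branches $3^2,\dotsc,3^{q-1}$ and $5^2,\dotsc,5^{q-1}$ absorbing the high prime powers), and the remaining classes are then dispatched through $7$-, $13$-, $17$-, and $19$-nodes fed by wedges such as $\{11,3,5\}\times13$ and $\{11,3,5,7\}\times17$, with a prime $q>19$ fixed at the end. To complete your argument you would need to exhibit a diagram at this level of specificity and carry out the distinctness and child-count checks on it; until then the existence claim remains open.
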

\begin{proof}
The tree diagram of this covering system is given by Figure~\ref{fig:seven11s}, where subtrees $T_1$ to $T_4$ are given in Figures~\ref{fig:seven11sT1} to \ref{fig:seven11sT4}, respectively.  Here, $q>19$ is a prime.
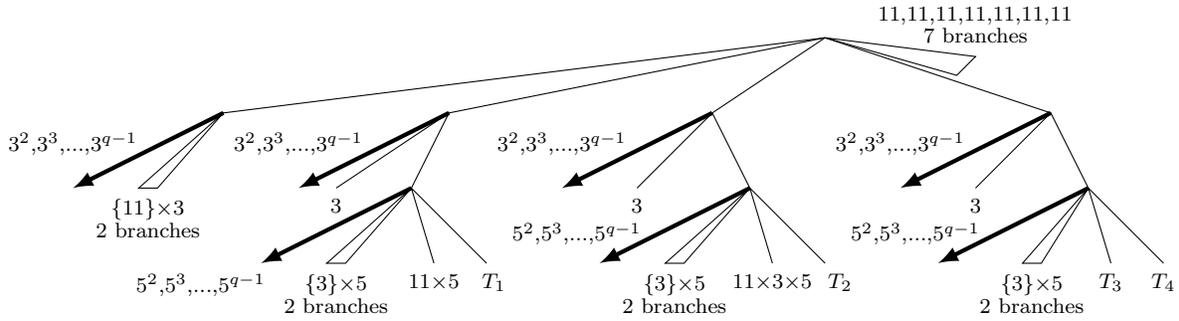
\begin{figure}[H]
\centering
\begin{tikzpicture}[scale=1]
\draw(0,0)--(2,-0.25)--(1.75,-0.5)--(0,0);
\node[above]at(2,-0.2){$\substack{11,11,11,11,11,11,11\\7\text{ branches}}$};
\draw(0,0)--(-8,-1)--(-9.125,-2)--(-8.875,-2)--(-8,-1);\draw[ultra thick,-latex](-8,-1)--(-10,-2);
\node[above]at(-10,-1.7){$\substack{3^2,3^3,\dotsc,3^{q-1}}$};\node[below]at(-9,-2){$\substack{\{11\}\times3\\2\text{ branches}}$};

\draw(0,0)--(-5,-1)--(-6.5,-2);\draw[ultra thick,-latex](-5,-1)--(-7,-2);
\node[above]at(-7,-1.7){$\substack{3^2,3^3,\dotsc,3^{q-1}}$};\node[below]at(-6.5,-2){$\substack{3}$};
\draw(-5,-1)--(-5.5,-2)--(-6.625,-3)--(-6.375,-3)--(-5.5,-2);\draw[ultra thick,-latex](-5.5,-2)--(-7.5,-3);
\node[below]at(-8.3,-3){$\substack{5^2,5^3,\dotsc,5^{q-1}}$};\node[below]at(-6.5,-3){$\substack{\{3\}\times5\\2\text{ branches}}$};
\draw(-5.5,-2)--(-5.2,-3);\draw(-5.5,-2)--(-4.5,-3);\node[below]at(-5.2,-3){$\substack{11\times5}$};\node[below]at(-4.4,-3){$\substack{T_1}$};

\draw(0,0)--(-1.5,-1)--(-2.5,-2);\draw[ultra thick,-latex](-1.5,-1)--(-3.5,-2);
\node[above]at(-3.5,-1.7){$\substack{3^2,3^3,\dotsc,3^{q-1}}$};\node[below]at(-2.5,-2){$\substack{3}$};
\draw(-1.5,-1)--(-1,-2)--(-2.125,-3)--(-1.875,-3)--(-1,-2);\draw[ultra thick,-latex](-1,-2)--(-3,-3);
\node[above]at(-3.3,-2.9){$\substack{5^2,5^3,\dotsc,5^{q-1}}$};\node[below]at(-2,-3){$\substack{\{3\}\times5\\2\text{ branches}}$};
\draw(-1,-2)--(-0.7,-3);\draw(-1,-2)--(0,-3);\node[below]at(-0.7,-3){$\substack{11\times3\times5}$};\node[below]at(0.2,-3){$\substack{T_2}$};

\draw(0,0)--(3,-1)--(2,-2);\draw[ultra thick,-latex](3,-1)--(1,-2);
\node[above]at(1,-1.7){$\substack{3^2,3^3,\dotsc,3^{q-1}}$};\node[below]at(2,-2){$\substack{3}$};
\draw(3,-1)--(3.5,-2)--(2.625,-3)--(2.875,-3)--(3.5,-2);\draw[ultra thick,-latex](3.5,-2)--(1.5,-3);
\node[above]at(1.2,-2.9){$\substack{5^2,5^3,\dotsc,5^{q-1}}$};\node[below]at(2.75,-3){$\substack{\{3\}\times5\\2\text{ branches}}$};
\draw(3.5,-2)--(3.8,-3);\draw(3.5,-2)--(4.5,-3);\node[below]at(3.8,-3){$\substack{T_3}$};\node[below]at(4.5,-3){$\substack{T_4}$};
\end{tikzpicture}
\caption{Odd covering with $11$ used exactly seven times as a modulus}
\label{fig:seven11s}
\end{figure}

\begin{figure}[H]
\centering
\begin{tikzpicture}[scale=0.98]
\draw[ultra thick,-latex](0,0)--(-8,-1);\node[below]at(-8.3,-1){$\substack{7^2,7^3,\dotsc,7^{q-1}}$};
\draw(0,0)--(-6.5,-1)--(-6,-1)--(0,0);\node[below]at(-6.25,-1){$\substack{\{3\}\times7\\2\text{ branches}}$};
\draw(0,0)--(-4.5,-1)--(-4,-1)--(0,0);\node[below]at(-4.25,-1){$\substack{\{3\}\times5\times7\\2\text{ branches}}$};
\draw(0,0)--(-2,-1);\draw[ultra thick,-latex](-2,-1)--(-4,-2);\node[below]at(-4.7,-2){$\substack{13^2,13^3,\dotsc,13^{q-1}}$};
\draw(-2,-1)--(-2.625,-2)--(-2.375,-2)--(-2,-1);\node[below]at(-2.5,-2){$\substack{\{11,3,5\}\times13\\8\text{ branches}}$};
\draw(-2,-1)--(-0.625,-2)--(-0.375,-2)--(-2,-1);\node[below]at(-0.5,-2){$\substack{\{11,3\}\times7\times13\\4\text{ branches}}$};
\draw(0,0)--(4.7,-1);\draw[ultra thick,-latex](4.7,-1)--(2.7,-2);\node[below]at(2,-2){$\substack{13^2,13^3,\dotsc,13^{q-1}}$};
\draw(4.7,-1)--(4.075,-2)--(4.325,-2)--(4.7,-1);\node[below]at(4.2,-2){$\substack{\{11,3,5\}\times13\\8\text{ branches}}$};
\draw(4.7,-1)--(6.075,-2)--(6.325,-2)--(4.7,-1);\node[below]at(6.2,-2){$\substack{\{11,3\}\times5\times7\times13\\4\text{ branches}}$};
\end{tikzpicture}
\caption{$T_1$ in Figure~\ref{fig:seven11s}}
\label{fig:seven11sT1}
\end{figure}

\begin{figure}[H]
\centering
\begin{tikzpicture}[scale=0.96]
\draw[ultra thick,-latex](0,0)--(-8,-1);\node[below]at(-8.3,-1){$\substack{7^2,7^3,\dotsc,7^{q-1}}$};
\draw(0,0)--(-6.5,-1)--(-6,-1)--(0,0);\node[below]at(-6.25,-1){$\substack{\{3\}\times7\\2\text{ branches}}$};
\draw(0,0)--(-4.5,-1)--(-4,-1)--(0,0);\node[below]at(-4.25,-1){$\substack{\{3\}\times5\times7\\2\text{ branches}}$};
\draw(0,0)--(-2,-1);\draw[ultra thick,-latex](-2,-1)--(-4,-2);\node[below]at(-4.4,-2){$\substack{17^2,17^3,\dotsc,17^{q-1}}$};
\draw(-2,-1)--(-2.125,-2)--(-1.875,-2)--(-2,-1);\node[below]at(-2,-2){$\substack{\{11,3,5,7\}\times17\\16\text{ branches}}$};

\draw(0,0)--(4,-1);\draw[ultra thick,-latex](4,-1)--(0.5,-2);\node[above]at(0.3,-1.9){$\substack{19^2,19^3,\dotsc,19^{q-1}}$};
\draw(4,-1)--(6,-1.125)--(5.75,-1.25)--(4,-1);\node[above]at(6,-1.075){$\substack{\{11,3,5,7\}\times19\\16\text{ branches}}$};
\draw(4,-1)--(2,-2);\draw[ultra thick,-latex](2,-2)--(0,-3);\node[below]at(-0.7,-3){$\substack{13^2,13^3,\dotsc,13^{q-1}}$};
\draw(2,-2)--(1.875,-3)--(2.125,-3)--(2,-2);\node[below]at(2,-3){$\substack{\{11,3,5,7\}\times19\times13\\16\text{ available moduli}\\\text{to cover }12\text{ branches}}$};
\draw(4,-1)--(6.5,-2);\draw[ultra thick,-latex](6.5,-2)--(4.5,-3);\node[above]at(4.1,-2.8){$\substack{17^2,17^3,\dotsc,17^{q-1}}$};
\draw(6.5,-2)--(6.375,-3)--(6.625,-3)--(6.5,-2);\node[below]at(6.5,-3){$\substack{\{11,3,5,7\}\times19\times17\\16\text{ branches}}$};
\end{tikzpicture}
\caption{$T_2$ in Figure~\ref{fig:seven11s}}
\label{fig:seven11sT2}
\end{figure}

\begin{figure}[H]
\centering
\begin{minipage}{0.48\textwidth}
\centering
\begin{tikzpicture}[scale=1]
\draw[ultra thick,-latex](0,0)--(-4,-1);\node[above]at(-4,-0.8){$\substack{7^2,7^3,\dotsc,7^{q-1}}$};
\draw(0,0)--(-2.125,-1)--(-1.875,-1)--(0,0);\node[below]at(-2,-1){$\substack{\{3\}\times7\\2\text{ branches}}$};
\draw(0,0)--(-0.125,-1)--(0.125,-1)--(0,0);\node[below]at(0,-1){$\substack{\{3\}\times11\times5\times7\\2\text{ branches}}$};
\draw(0,0)--(1.875,-1)--(2.125,-1)--(0,0);\node[below]at(2,-1){$\substack{\{3\}\times11\times7\\2\text{ branches}}$};
\end{tikzpicture}
\caption{$T_3$ in Figure~\ref{fig:seven11s}}
\label{fig:seven11sT3}
\end{minipage}
\begin{minipage}{0.48\textwidth}
\centering
\begin{tikzpicture}[scale=1]
\draw[ultra thick,-latex](0,0)--(-4,-1);\node[above]at(-4,-0.8){$\substack{7^2,7^3,\dotsc,7^{q-1}}$};
\draw(0,0)--(-2.125,-1)--(-1.875,-1)--(0,0);\node[below]at(-2,-1){$\substack{\{3\}\times7\\2\text{ branches}}$};
\draw(0,0)--(-0.125,-1)--(0.125,-1)--(0,0);\node[below]at(0,-1){$\substack{\{3\}\times5\times7\\2\text{ branches}}$};
\draw(0,0)--(1.875,-1)--(2.125,-1)--(0,0);\node[below]at(2,-1){$\substack{\{3\}\times11\times7\\2\text{ branches}}$};
\end{tikzpicture}
\caption{$T_4$ in Figure~\ref{fig:seven11s}}
\label{fig:seven11sT4}
\end{minipage}
\end{figure}
\end{proof}

Lastly, we establish that $t_p\leq p-5$ for all primes $p\geq 23$.

\begin{theorem}\label{thm:t23}
Let $p\geq23$ be a prime. There exists a covering system of the integers such that all moduli are odd and distinct except that $p$ is used exactly $p-5$ times as a modulus.
\end{theorem}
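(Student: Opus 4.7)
The plan is to exhibit an explicit tree diagram generalizing those used for Theorems~\ref{thm:t7} and~\ref{thm:t11}. The root will be a $p$-node whose $p$ children split as follows: $p-5$ of them are leaves carrying the modulus $p$ itself, which accounts for the $p-5$ copies of $p$ allowed by the theorem; the remaining five children become roots of subtrees $T_1, T_2, T_3, T_4, T_5$, each responsible for covering exactly one of the five leftover residue classes modulo $p$.

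For the construction of each $T_i$, I would use the same toolkit developed in Section~\ref{sec:tree} and applied in Theorems~\ref{thm:t7} and~\ref{thm:t11}: $r$-nodes for various small primes $r$, wedges to merge sibling leaves that share a common factor, and ``power branches'' to compactly encode infinitely many prime-power repetitions as in Figure~\ref{fig:highpowercondensed}, terminated by a large auxiliary prime $q$. Because $p \geq 23$, the seven small odd primes $3, 5, 7, 11, 13, 17, 19$ are all strictly less than $p$ and hence distinct from it, giving ample raw material to assign each $T_i$ a distinct ``prime signature'' so that the moduli across the five subtrees remain pairwise distinct. A natural template is to build the simpler subtrees from $3$-nodes with a wedge $\{p\} \times 3$ plus a power branch in $3$, while the more intricate subtrees nest further wedges involving $5, 7, 11, 13, 17, 19$ and additional power branches in those primes.

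The main obstacle will be the explicit combinatorial bookkeeping required to satisfy three conditions simultaneously: (i) the subtrees $T_1, \ldots, T_5$ collectively cover all five residue classes modulo $p$; (ii) the moduli used across the five $T_i$'s are pairwise distinct and disjoint from the modulus $p$ itself, though products such as $3p$, $5p$, $7p$, $\ldots$ are permitted; and (iii) every branch terminates in the condensed representation at a wedge or a power branch so that only finitely many node descriptions need to be listed. Once such a tree is written down, verification that it is a genuine covering reduces to a straightforward trace through the diagram, exactly as in the proofs of Theorems~\ref{thm:t7} and~\ref{thm:t11}. Since the small primes used do not depend on $p$, a single design template, with $p$ entering only as a multiplier inside selected wedges and as the root prime, should work uniformly for every prime $p \geq 23$.
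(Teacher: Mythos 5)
Your outline coincides with the paper's strategy: the proof of Theorem~\ref{thm:t23} is exactly a tree whose root is a $p$-node with $p-5$ leaves labelled $p$ and five remaining children covered by subtrees built from the primes $3,5,7,11,13,17,19$, wedges, and power branches terminated by an auxiliary prime $q$ (Figures~\ref{fig:p-5ps} through \ref{fig:p-5psT11}). The difficulty is that for an existence theorem established by explicit construction, the construction \emph{is} the proof, and you have deferred it entirely to ``combinatorial bookkeeping.'' It is not self-evident that five full residue classes modulo $p$ can be covered by pairwise distinct odd moduli drawn only from $\{3,5,7,11,13,17,19\}$, their powers, $p$, and one auxiliary prime $q$: distinctness must hold globally across all five subtrees at once, so this is a genuine packing problem, and the paper's solution requires eleven interlocking subtrees ($T_1$ through $T_{11}$, with $T_5$ and $T_6$ spawning further nested subtrees $T_7$--$T_{11}$) together with wedges that deliberately reuse $p$ as a factor (e.g.\ $\{p,3,5\}\times13$) to manufacture enough distinct moduli. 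Asserting that the seven small primes give ``ample raw material'' is not an argument; the entire content of the sequence of bounds $t_7\leq4$, $t_{11}\leq7$, $t_p\leq p-5$ is precisely how many residue classes can be absorbed into such subtrees, so feasibility is the thing to be proved, not assumed.

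Two smaller points. You should require $q\neq p$ in addition to $q$ being large: since $p\geq23$ is arbitrary, it could otherwise coincide with the prime terminating the power branches, breaking distinctness of the moduli $r^iq$. Your claim that ``a single design template \ldots should work uniformly for every prime $p\geq23$'' is true in the paper's construction, but only because $p$ enters solely as the root prime and as a multiplicative factor in certain wedges; this uniformity is a feature of the specific tree the paper exhibits, and it too needs the explicit diagram to be verified.
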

\begin{proof}
The tree diagram of this covering system is given by Figure~\ref{fig:p-5ps}, where subtrees $T_1$ to $T_{11}$ are given in Figures~\ref{fig:p-5psT1} to \ref{fig:p-5psT11}, respectively. Here, $q>19$ and $q\neq p$ is a prime.
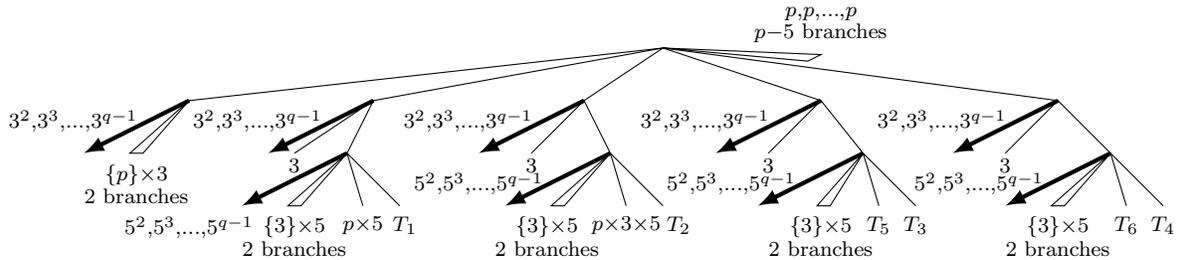
\begin{figure}[H]
\centering
\begin{tikzpicture}[scale=0.7]
\draw(0,0)--(3,-0.125)--(2.75,-0.25)--(0,0);
\node[above]at(3,-0.1){$\substack{p,p,\dotsc,p\\p-5\text{ branches}}$};
\draw(0,0)--(-9,-1)--(-10.125,-2)--(-9.875,-2)--(-9,-1);\draw[ultra thick,-latex](-9,-1)--(-11,-2);
\node[above]at(-11.2,-1.8){$\substack{3^2,3^3,\dotsc,3^{q-1}}$};\node[below]at(-10,-2){$\substack{\{p\}\times3\\2\text{ branches}}$};

\draw(0,0)--(-5.5,-1)--(-7,-2);\draw[ultra thick,-latex](-5.5,-1)--(-7.5,-2);
\node[above]at(-7.7,-1.8){$\substack{3^2,3^3,\dotsc,3^{q-1}}$};\node[below]at(-7,-1.9){$\substack{3}$};
\draw(-5.5,-1)--(-6,-2)--(-7.125,-3)--(-6.875,-3)--(-6,-2);\draw[ultra thick,-latex](-6,-2)--(-8,-3);
\node[below]at(-9,-3){$\substack{5^2,5^3,\dotsc,5^{q-1}}$};\node[below]at(-7,-3){$\substack{\{3\}\times5\\2\text{ branches}}$};
\draw(-6,-2)--(-5.7,-3);\draw(-6,-2)--(-5,-3);\node[below]at(-5.7,-3){$\substack{p\times5}$};\node[below]at(-4.9,-3){$\substack{T_1}$};

\draw(0,0)--(-1.5,-1)--(-2.5,-2);\draw[ultra thick,-latex](-1.5,-1)--(-3.5,-2);
\node[above]at(-3.7,-1.8){$\substack{3^2,3^3,\dotsc,3^{q-1}}$};\node[below]at(-2.5,-1.9){$\substack{3}$};
\draw(-1.5,-1)--(-1,-2)--(-2.125,-3)--(-1.875,-3)--(-1,-2);\draw[ultra thick,-latex](-1,-2)--(-3,-3);
\node[above]at(-3.5,-3){$\substack{5^2,5^3,\dotsc,5^{q-1}}$};\node[below]at(-2.2,-3){$\substack{\{3\}\times5\\2\text{ branches}}$};
\draw(-1,-2)--(-0.7,-3);\draw(-1,-2)--(0,-3);\node[below]at(-0.7,-3){$\substack{p\times3\times5}$};\node[below]at(0.3,-3){$\substack{T_2}$};

\draw(0,0)--(3,-1)--(2,-2);\draw[ultra thick,-latex](3,-1)--(1,-2);
\node[above]at(0.8,-1.8){$\substack{3^2,3^3,\dotsc,3^{q-1}}$};\node[below]at(2,-1.9){$\substack{3}$};
\draw(3,-1)--(3.8,-2)--(2.925,-3)--(3.175,-3)--(3.8,-2);\draw[ultra thick,-latex](3.8,-2)--(1.8,-3);
\node[above]at(1.3,-3){$\substack{5^2,5^3,\dotsc,5^{q-1}}$};\node[below]at(3,-3){$\substack{\{3\}\times5\\2\text{ branches}}$};
\draw(3.8,-2)--(4.1,-3);\draw(3.8,-2)--(4.8,-3);\node[below]at(4.1,-3){$\substack{T_5}$};\node[below]at(4.8,-3){$\substack{T_3}$};

\draw(0,0)--(7.5,-1)--(6.5,-2);\draw[ultra thick,-latex](7.5,-1)--(5.5,-2);
\node[above]at(5.3,-1.8){$\substack{3^2,3^3,\dotsc,3^{q-1}}$};\node[below]at(6.5,-1.9){$\substack{3}$};
\draw(7.5,-1)--(8.5,-2)--(7.375,-3)--(7.625,-3)--(8.5,-2);\draw[ultra thick,-latex](8.5,-2)--(6.5,-3);
\node[above]at(6,-3){$\substack{5^2,5^3,\dotsc,5^{q-1}}$};\node[below]at(7.5,-3){$\substack{\{3\}\times5\\2\text{ branches}}$};
\draw(8.5,-2)--(8.8,-3);\draw(8.5,-2)--(9.5,-3);\node[below]at(8.8,-3){$\substack{T_6}$};\node[below]at(9.5,-3){$\substack{T_4}$};
\end{tikzpicture}
\caption{Odd covering with $p$ ($p\geq23$) used exactly $p-5$ times as a modulus}
\label{fig:p-5ps}
\end{figure}

\begin{figure}[H]
\centering
\begin{minipage}{0.48\textwidth}
\centering
\begin{tikzpicture}[scale=0.92]
\draw[ultra thick,-latex](0,0)--(-4,-1);\node[above]at(-4,-0.8){$\substack{7^2,7^3,\dotsc,7^{q-1}}$};
\draw(0,0)--(-2.75,-1)--(-2.25,-1)--(0,0);\node[below]at(-2.5,-1){$\substack{\{3,5\}\times7\\4\text{ branches}}$};
\draw(0,0)--(-1,-1);\node[below]at(-1,-1){$\substack{p\times5\times7}$};
\draw(0,0)--(1,-1)--(-1.125,-2)--(-0.875,-2)--(1,-1)--(0.625,-2)--(0.875,-2)--(1,-1)--(2.375,-2)--(2.625,-2)--(1,-1);\draw[ultra thick,-latex](1,-1)--(-2.5,-2);\node[below]at(-3.3,-2){$\substack{11^2,11^3,\dotsc,11^{q-1}}$};
\node[below]at(-1,-2){$\substack{\{3,7\}\times5\times11\\4\text{ branches}}$};\node[below]at(0.75,-2){$\substack{\{3,7\}\times11\\4\text{ branches}}$};\node[below]at(2.5,-2){$\substack{\{3\}\times p\times5\times11\\2\text{ branches}}$};
\end{tikzpicture}
\caption{$T_1$ in Figure~\ref{fig:p-5ps}}
\label{fig:p-5psT1}
\end{minipage}
\begin{minipage}{0.48\textwidth}
\centering
\begin{tikzpicture}[scale=0.92]
\draw[ultra thick,-latex](0,0)--(-4,-1);\node[above]at(-4,-0.8){$\substack{7^2,7^3,\dotsc,7^{q-1}}$};
\draw(0,0)--(-2.75,-1)--(-2.25,-1)--(0,0);\node[below]at(-2.5,-1){$\substack{\{3,5\}\times7\\4\text{ branches}}$};
\draw(0,0)--(-1,-1);\node[below]at(-1,-1){$\substack{p\times3\times5\times7}$};
\draw(0,0)--(1,-1)--(-1.125,-2)--(-0.875,-2)--(1,-1)--(0.625,-2)--(0.875,-2)--(1,-1)--(2.375,-2)--(2.625,-2)--(1,-1);\draw[ultra thick,-latex](1,-1)--(-2.5,-2);\node[below]at(-3.3,-2){$\substack{11^2,11^3,\dotsc,11^{q-1}}$};
\node[below]at(-1,-2){$\substack{\{3,7\}\times5\times11\\4\text{ branches}}$};\node[below]at(0.75,-2){$\substack{\{3,7\}\times11\\4\text{ branches}}$};\node[below]at(2.7,-2){$\substack{\{3\}\times p\times5\times7\times11\\2\text{ branches}}$};
\end{tikzpicture}
\caption{$T_2$ in Figure~\ref{fig:p-5ps}}
\label{fig:p-5psT2}
\end{minipage}
\end{figure}

\begin{figure}[H]
\centering
\begin{minipage}{0.48\textwidth}
\centering
\begin{tikzpicture}[scale=0.92]
\draw[ultra thick,-latex](0,0)--(-4,-1);\node[above]at(-4,-0.8){$\substack{7^2,7^3,\dotsc,7^{q-1}}$};
\draw(0,0)--(-2.75,-1)--(-2.25,-1)--(0,0);\node[below]at(-2.5,-1){$\substack{\{3,5\}\times7\\4\text{ branches}}$};
\draw(0,0)--(-1,-1);\node[below]at(-1,-1){$\substack{p\times7}$};
\draw(0,0)--(1,-1)--(-1.125,-2)--(-0.875,-2)--(1,-1)--(0.625,-2)--(0.875,-2)--(1,-1)--(2.375,-2)--(2.625,-2)--(1,-1);\draw[ultra thick,-latex](1,-1)--(-2.5,-2);\node[below]at(-3.3,-2){$\substack{11^2,11^3,\dotsc,11^{q-1}}$};
\node[below]at(-1,-2){$\substack{\{3,7\}\times5\times11\\4\text{ branches}}$};\node[below]at(0.75,-2){$\substack{\{3,7\}\times11\\4\text{ branches}}$};\node[below]at(2.5,-2){$\substack{\{3\}\times p\times11\\2\text{ branches}}$};
\end{tikzpicture}
\caption{$T_3$ in Figure~\ref{fig:p-5ps}}
\label{fig:p-5psT3}
\end{minipage}
\begin{minipage}{0.48\textwidth}
\centering
\begin{tikzpicture}[scale=0.92]
\draw[ultra thick,-latex](0,0)--(-4,-1);\node[above]at(-4,-0.8){$\substack{7^2,7^3,\dotsc,7^{q-1}}$};
\draw(0,0)--(-2.75,-1)--(-2.25,-1)--(0,0);\node[below]at(-2.5,-1){$\substack{\{3,5\}\times7\\4\text{ branches}}$};
\draw(0,0)--(-1,-1);\node[below]at(-1,-1){$\substack{p\times3\times7}$};
\draw(0,0)--(1,-1)--(-1.125,-2)--(-0.875,-2)--(1,-1)--(0.625,-2)--(0.875,-2)--(1,-1)--(2.375,-2)--(2.625,-2)--(1,-1);\draw[ultra thick,-latex](1,-1)--(-2.5,-2);\node[below]at(-3.3,-2){$\substack{11^2,11^3,\dotsc,11^{q-1}}$};
\node[below]at(-1,-2){$\substack{\{3,7\}\times5\times11\\4\text{ branches}}$};\node[below]at(0.75,-2){$\substack{\{3,7\}\times11\\4\text{ branches}}$};\node[below]at(2.5,-2){$\substack{\{3\}\times p\times7\times11\\2\text{ branches}}$};
\end{tikzpicture}
\caption{$T_4$ in Figure~\ref{fig:p-5ps}}
\label{fig:p-5psT4}
\end{minipage}
\end{figure}

\begin{figure}[H]
\centering
\begin{tikzpicture}[scale=0.98]
\draw[ultra thick,-latex](0,0)--(-8,-1);\node[above]at(-8,-0.9){$\substack{7^2,7^3,\dotsc,7^{q-1}}$};
\draw(0,0)--(-6.8,-1)--(-6.3,-1)--(0,0);\node[below]at(-6.8,-1){$\substack{\{3\}\times7\\2\text{ branches}}$};

\draw(0,0)--(-5,-1);\draw[ultra thick,-latex](-5,-1)--(-6.5,-2);\node at(-7.8,-2){$\substack{13^2,13^3,\dotsc,13^{q-1}}$};
\draw(-5,-1)--(-5.625,-2)--(-5.375,-2)--(-5,-1);\node[below]at(-5.7,-2){$\substack{\{p,3,5\}\times13\\8\text{ branches}}$};
\draw(-5,-1)--(-4.375,-2)--(-4.125,-2)--(-5,-1);\node[below]at(-4,-2){$\substack{\{3,5\}\times7\times13\\4\text{ branches}}$};

\draw(0,0)--(-1,-1);\draw[ultra thick,-latex](-1,-1)--(-2.8,-2);\node[above]at(-3.1,-1.7){$\substack{13^2,13^3,\dotsc,13^{q-1}}$};
\draw(-1,-1)--(-1.875,-2)--(-1.625,-2)--(-1,-1);\node[below]at(-1.9,-2){$\substack{\{p,3,5\}\times13\\8\text{ branches}}$};
\draw(-1,-1)--(-0.125,-2)--(0.125,-2)--(-1,-1);\node[below]at(-0,-2){$\substack{\{3,5\}\times p\times7\times13\\4\text{ branches}}$};

\draw(0,0)--(1,-1);\node[below]at(1,-1){$\substack{p\times7}$};
\draw(0,0)--(3,-1);\node[below]at(3,-1){$\substack{T_7}$};
\end{tikzpicture}
\caption{$T_5$ in Figure~\ref{fig:p-5ps}}
\label{fig:p-5psT5}
\end{figure}

\begin{figure}[H]
\centering
\begin{tikzpicture}[scale=0.98]
\draw[ultra thick,-latex](0,0)--(-8,-1);\node[above]at(-8,-0.9){$\substack{11^2,11^3,\dotsc,11^{q-1}}$};
\draw(0,0)--(5.25,-1)--(5.75,-1)--(0,0);\node[below]at(5.6,-1){$\substack{\{3,7\}\times11\\4\text{ branches}}$};
\draw(0,0)--(6.25,-1)--(6.75,-1)--(0,0);\node[above]at(6.2,-0.9){$\substack{\{3\}\times p\times11\\2\text{ branches}}$};
\draw(0,0)--(-6,-1)--(-7.625,-2)--(-7.375,-2)--(-6,-1)--(-6.125,-2)--(-5.875,-2)--(-6,-1);\draw[ultra thick,-latex](-6,-1)--(-8.3,-2);
\node[above]at(-8.1,-1.6){$\substack{13^2,13^3,\dotsc,13^{q-1}}$};\node[below]at(-7.5,-2){$\substack{\{p,3,5\}\times13\\8\text{ branches}}$};\node[below]at(-6,-2){$\substack{\{p,3\}\\\times11\times13\\4\text{ branches}}$};

\draw(0,0)--(-2.5,-1)--(-4.125,-2)--(-3.875,-2)--(-2.5,-1)--(-2.625,-2)--(-2.375,-2)--(-2.5,-1);\draw[ultra thick,-latex](-2.5,-1)--(-4.8,-2);
\node[above]at(-4.6,-1.6){$\substack{13^2,13^3,\dotsc,13^{q-1}}$};\node[below]at(-4.2,-2){$\substack{\{p,3,5\}\times13\\8\text{ branches}}$};\node[below]at(-2.5,-2){$\substack{\{p,3\}\\\times5\times11\times13\\4\text{ branches}}$};

\draw(0,0)--(1,-1)--(-0.625,-2)--(-0.375,-2)--(1,-1)--(0.875,-2)--(1.125,-2)--(1,-1);\draw[ultra thick,-latex](1,-1)--(-1.3,-2);
\node[above]at(-1.1,-1.6){$\substack{13^2,13^3,\dotsc,13^{q-1}}$};\node[below]at(-0.7,-2){$\substack{\{p,3,5\}\times13\\8\text{ branches}}$};\node[below]at(1,-2){$\substack{\{p,3\}\\\times7\times11\times13\\4\text{ branches}}$};

\draw(0,0)--(4.5,-1)--(2.875,-2)--(3.125,-2)--(4.5,-1)--(4.575,-2)--(4.825,-2)--(4.5,-1);\draw[ultra thick,-latex](4.5,-1)--(2.2,-2);
\node[above]at(2.4,-1.6){$\substack{13^2,13^3,\dotsc,13^{q-1}}$};\node[below]at(2.8,-2){$\substack{\{p,3,5\}\times13\\8\text{ branches}}$};\node[below]at(4.7,-2){$\substack{\{p,3\}\\\times5\times7\times11\times13\\4\text{ branches}}$};
\end{tikzpicture}
\caption{$T_7$ in Figure~\ref{fig:p-5psT5}}
\label{fig:p-5psT7}
\end{figure}

\begin{figure}[H]
\centering
\begin{tikzpicture}[scale=0.98]
\draw[ultra thick,-latex](0,0)--(-8,-1);\node[above]at(-8,-0.9){$\substack{7^2,7^3,\dotsc,7^{q-1}}$};
\draw(0,0)--(-6.8,-1)--(-6.3,-1)--(0,0);\node[below]at(-6.8,-1){$\substack{\{3\}\times7\\2\text{ branches}}$};

\draw(0,0)--(-5,-1);\draw[ultra thick,-latex](-5,-1)--(-6.5,-2);\node at(-7.8,-2){$\substack{17^2,17^3,\dotsc,17^{q-1}}$};
\draw(-5,-1)--(-5.725,-2)--(-5.475,-2)--(-5,-1);\node[below]at(-5.8,-2){$\substack{\{p,3,5\}\times17\\8\text{ branches}}$};
\draw(-5,-1)--(-4.375,-2)--(-4.125,-2)--(-5,-1);\node[below]at(-4,-2){$\substack{\{p,3,5\}\times7\times17\\8\text{ branches}}$};

\draw(0,0)--(-1,-1);\draw[ultra thick,-latex](-1,-1)--(-2.8,-2);\node[above]at(-3.1,-1.7){$\substack{19^2,19^3,\dotsc,19^{q-1}}$};
\draw(-1,-1)--(-1.875,-2)--(-1.625,-2)--(-1,-1);\node[below]at(-1.9,-2){$\substack{\{p,3,5\}\times19\\8\text{ branches}}$};
\draw(-1,-1)--(-0.125,-2)--(0.125,-2)--(-1,-1);\node[below]at(-0,-2){$\substack{\{p,3,5\}\times7\times19\\8\text{ branches}}$};

\draw(-1,-1)--(1.5,-2);\node[below]at(1.5,-2){$\substack{T_9}$};

\draw(-1,-1)--(3.5,-2);\draw[ultra thick,-latex](3.5,-2)--(1.5,-3);\node[below]at(1,-3){$\substack{17^2,17^3,\dotsc,17^{q-1}}$};
\draw(3.5,-2)--(3.175,-3)--(3.425,-3)--(3.5,-2);\node[below]at(3.3,-3){$\substack{\{p,3,5\}\times17\\8\text{ branches}}$};
\draw(3.5,-2)--(5.375,-3)--(5.625,-3)--(3.5,-2);\node[below]at(5.5,-3){$\substack{\{p,3,5\}\times7\times19\times17\\8\text{ branches}}$};

\draw(0,0)--(4,-1);\node[below]at(4,-1){$\substack{p\times3\times7}$};
\draw(0,0)--(6,-1);\node[below]at(6,-1){$\substack{T_8}$};
\end{tikzpicture}
\caption{$T_6$ in Figure~\ref{fig:p-5ps}}
\label{fig:p-5psT6}
\end{figure}

\begin{figure}[H]
\centering
\begin{tikzpicture}[scale=0.95]
\draw[ultra thick,-latex](0,0)--(-8.3,-1);\node[above]at(-8,-0.9){$\substack{11^2,11^3,\dotsc,11^{q-1}}$};
\draw(0,0)--(5.25,-1)--(5.75,-1)--(0,0);\node[below]at(6,-1){$\substack{\{3,7\}\times11\\4\text{ branches}}$};
\draw(0,0)--(6.25,-1)--(6.75,-1)--(0,0);\node[above]at(6.2,-0.9){$\substack{\{3\}\times p\times7\times11\\2\text{ branches}}$};
\draw(0,0)--(-6.5,-1)--(-8.125,-2)--(-7.875,-2)--(-6.5,-1)--(-6.625,-2)--(-6.375,-2)--(-6.5,-1);\draw[ultra thick,-latex](-6.5,-1)--(-8.8,-2);
\node[above,rotate=(atan(1/2.3))]at(-8.455,-1.85){$\substack{17^2,17^3,\dotsc,17^{q-1}}$};\node[below]at(-8,-2){$\substack{\{p,3,5\}\times17\\8\text{ branches}}$};\node[below]at(-6.5,-2){$\substack{\{p,3,5\}\\\times11\times17\\8\text{ branches}}$};

\draw(0,0)--(-3.3,-1)--(-4.925,-2)--(-4.675,-2)--(-3.3,-1)--(-3.425,-2)--(-3.175,-2)--(-3.3,-1);\draw[ultra thick,-latex](-3.3,-1)--(-5.6,-2);
\node[above,rotate=(atan(1/2.3))]at(-4.83,-1.67){$\substack{17^2,17^3,\dotsc,17^{q-1}}$};\node[below]at(-4.95,-2){$\substack{\{p,3,5\}\times17\\8\text{ branches}}$};\node[below]at(-3.3,-2){$\substack{\{p,3,5\}\\\times7\times11\times17\\8\text{ branches}}$};

\draw(0,0)--(-0.1,-1)--(-1.725,-2)--(-1.475,-2)--(-0.1,-1)--(-0.225,-2)--(0.025,-2)--(-0.1,-1);\draw[ultra thick,-latex](-0.1,-1)--(-2.4,-2);
\node[above,rotate=(atan(1/2.3))]at(-1.57,-1.67){$\substack{19^2,19^3,\dotsc,19^{q-1}}$};\node[below]at(-1.6,-2){$\substack{\{p,3,5\}\times19\\8\text{ branches}}$};\node[below]at(-0.1,-2){$\substack{\{p,3,5\}\\\times11\times19\\8\text{ branches}}$};
\draw(-0.1,-1)--(0.6,-2);\node[below]at(0.7,-2){$\substack{T_9}$};
\draw(-0.1,-1)--(1.2,-2);\node[below]at(1.3,-2){$\substack{T_{10}}$};

\draw(0,0)--(4.3,-1)--(2.675,-2)--(2.925,-2)--(4.3,-1)--(4.175,-2)--(4.425,-2)--(4.3,-1);\draw[ultra thick,-latex](4.3,-1)--(2,-2);
\node[above,rotate=(atan(1/2.3))]at(2.675,-1.75){$\substack{19^2,19^3,\dotsc,19^{q-1}}$};\node[below]at(2.7,-2){$\substack{\{p,3,5\}\times19\\8\text{ branches}}$};\node[below]at(4.3,-2){$\substack{\{p,3,5\}\\\times7\times11\times19\\8\text{ branches}}$};
\draw(4.3,-1)--(5.1,-2);\node[below]at(5.2,-2){$\substack{T_9}$};
\draw(4.3,-1)--(5.6,-2);\node[below]at(5.7,-2){$\substack{T_{11}}$};
\end{tikzpicture}
\caption{$T_8$ in Figure~\ref{fig:p-5psT6}}
\label{fig:p-5psT8}
\end{figure}

\begin{figure}[H]
\centering
\begin{minipage}{0.32\textwidth}
\centering
\begin{tikzpicture}[scale=1]
\draw[ultra thick,-latex](0,0)--(-2,-1);\node[above]at(-2.4,-0.8){$\substack{17^2,17^3,\dotsc,17^{q-1}}$};
\draw(0,0)--(-1.125,-1)--(-0.875,-1)--(0,0);\node[below]at(-1,-1){$\substack{\{p,3,5\}\times17\\8\text{ branches}}$};
\draw(0,0)--(0.575,-1)--(0.825,-1)--(0,0);\node[below]at(0.7,-1){$\substack{\{p,3,5\}\\\times19\times17\\8\text{ branches}}$};
\end{tikzpicture}
\caption{$T_9$ in Figure~\ref{fig:p-5psT6} and Figure~\ref{fig:p-5psT8}}
\label{fig:p-5psT9}
\end{minipage}
\begin{minipage}{0.32\textwidth}
\centering
\begin{tikzpicture}[scale=0.92]
\draw[ultra thick,-latex](0,0)--(-2,-1);\node[above]at(-2.4,-0.8){$\substack{17^2,17^3,\dotsc,17^{q-1}}$};
\draw(0,0)--(-1.125,-1)--(-0.875,-1)--(0,0);\node[below]at(-1,-1){$\substack{\{p,3,5\}\times17\\8\text{ branches}}$};
\draw(0,0)--(0.875,-1)--(1.125,-1)--(0,0);\node[below]at(1,-1){$\substack{\{p,3,5\}\\\times11\times19\times17\\8\text{ branches}}$};
\end{tikzpicture}
\caption{$T_{10}$ in Figure~\ref{fig:p-5psT8}}
\label{fig:p-5psT10}
\end{minipage}
\begin{minipage}{0.32\textwidth}
\centering
\begin{tikzpicture}[scale=0.92]
\draw[ultra thick,-latex](0,0)--(-2,-1);\node[above]at(-2.4,-0.8){$\substack{17^2,17^3,\dotsc,17^{q-1}}$};
\draw(0,0)--(-1.125,-1)--(-0.875,-1)--(0,0);\node[below]at(-1,-1){$\substack{\{p,3,5\}\times17\\8\text{ branches}}$};
\draw(0,0)--(0.875,-1)--(1.125,-1)--(0,0);\node[below]at(1,-1){$\substack{\{p,3,5\}\\\times7\times11\times19\times17\\8\text{ branches}}$};
\end{tikzpicture}
\caption{$T_{11}$ in Figure~\ref{fig:p-5psT8}}
\label{fig:p-5psT11}
\end{minipage}
\end{figure}
\end{proof}

\section{Extension of results and concluding remarks}\label{sec:twolemmas}

We can extend Theorems~\ref{thm:tau7} and \ref{thm:t11} by using the following lemma.

\begin{lemma}\label{lem:squarefree}
Let $p$ be a prime and $t$ be a positive integer such that $t\leq p$. Let $\mathcal{C}_0$ be a covering system of the integers whose tree diagram is given in Figure~$\ref{fig:rootp-node}$, where all moduli are distinct except that $p$ is used exactly $p-t$
times as a modulus, and no modulus of $\mathcal{C}_0$ is divisible by $p^2$.
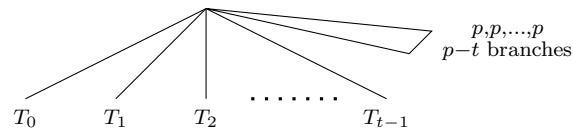
\begin{figure}[H]
\centering
\begin{tikzpicture}[scale=1.2]
\draw(0,0)--(2.5,-0.25)--(2.25,-0.5)--(0,0);
\foreach[count=\j]\i in{-2,-1,0,2}{\draw(0,0)--(\i,-1);}\draw[very thick,loosely dotted](0.5,-1)--(1.5,-1);
\foreach[count=\j]\i in{-2,-1,0}{\tikzmath{\x=int(\j-1));}\node[below]at(\i,-1){$\substack{T_\x}$};}\node[below]at(2,-1){$\substack{T_{t-1}}$};
\node[right]at(2.5,-0.375){$\substack{p,p,\dotsc,p\\p-t\text{ branches}}$};
\end{tikzpicture}
\caption{The root of a tree diagram for $\mathcal{C}_0$}
\label{fig:rootp-node}
\end{figure}
\noindent Then for all primes $q>p$, there exists a covering system $\mathcal{C}$ of the integers such that all moduli are distinct except that $q$ is used exactly $q-t$ times as a modulus, and no modulus of $\mathcal{C}$ is divisible by $q^2$. Furthermore, if all moduli of $\mathcal{C}_0$ are odd, then all moduli of $\mathcal{C}$ are odd; if all moduli of $\mathcal{C}_0$ are square-free, then all moduli of $\mathcal{C}$ are square-free.
\end{lemma}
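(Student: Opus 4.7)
The plan is to construct the tree diagram of $\mathcal{C}$ from that of $\mathcal{C}_0$ by replacing the root $p$-node with a $q$-node while systematically transferring each of the $t$ subtrees. Since the root of $\mathcal{C}_0$ is a $p$-node and no modulus of $\mathcal{C}_0$ is divisible by $p^2$, no subtree $T_i$ contains another $p$-node. Let $s_i\in\{0,1,\dotsc,p-1\}$ denote the residue modulo $p$ covered by $T_i$. Via the invertible substitution $n=s_i+pm$, the subtree $T_i$ is equivalent to an abstract covering system $\hat T_i$ of $\mathbb{Z}$ in the variable $m$, whose moduli (call them $m^{(i)}_j$) are obtained from those of $T_i$ by dividing out any factor of $p$, and are therefore all coprime to $p$.

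To build $\mathcal{C}$, I place a $q$-node at the root with $q-t$ leaves of modulus $q$ covering $q-t$ of the residues modulo $q$, and with $t$ subtrees $T'_0,\dotsc,T'_{t-1}$ covering the remaining residues $s'_0,\dotsc,s'_{t-1}$ (freely chosen, using Lemma~\ref{lem:shifted} as needed). Each $T'_i$ is built by applying $\hat T_i$ via the substitution $n=s'_i+qm$: a congruence $m\equiv r\pmod{m^{(i)}_j}$ of $\hat T_i$ lifts to $n\equiv s'_i+qr\pmod{q\,m^{(i)}_j}$, which in the tree diagram of $T'_i$ appears either as a leaf with modulus $q\,m^{(i)}_j$ (descending through the root $q$-node) or, after forgoing the $q$-factor, as a leaf with modulus $m^{(i)}_j$ branching directly from the root. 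I mirror the choices made in $T_i$: retain the $q$-factor in $T'_i$ exactly when the original modulus in $T_i$ retained the $p$-factor. This produces a bijection between the moduli of $T_i$ and those of $T'_i$ given by $m\mapsto m$ if $p\nmid m$ and $pm\mapsto qm$ if $p\mid m$.

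Assuming momentarily that no modulus of $\mathcal{C}_0$ has $q$ as a prime factor, the verification is immediate: the bijection preserves distinctness across subtrees, no transferred modulus equals $q$ (since no modulus in any $T_i$ equals $p$), and no transferred modulus is divisible by $q^2$ (since each contains at most one factor of $q$). Oddness and square-freeness carry over because $q$ is odd and enters only to the first power.

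The main obstacle is the case in which $q$ does appear as a prime factor of some modulus of $\mathcal{C}_0$: then the transfer can produce a collision between two moduli or a modulus divisible by $q^2$. I would address this by a preprocessing step that modifies $\mathcal{C}_0$ so $q$ no longer appears in any of its moduli. Concretely, each $q$-node in the tree diagram of $\mathcal{C}_0$ is replaced by a node labeled with a fresh auxiliary prime $q''$ chosen larger than every prime appearing in $\mathcal{C}_0$; the extra $q''-q$ child branches created by this enlargement are filled using power-branch structures in yet further fresh primes as in Figure~\ref{fig:highpowercondensed}, so that the covering property and all other structural constraints (distinctness, $p^2$-freeness, odd/square-free) are maintained. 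The result is a covering system $\tilde{\mathcal C}_0$ satisfying the hypotheses of the lemma (with the same $p$ and $t$) and in which $q$ no longer appears as a factor; the construction of the first two paragraphs applied to $\tilde{\mathcal C}_0$ then yields the desired $\mathcal{C}$.
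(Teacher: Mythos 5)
Your first two paragraphs (the root replacement and the CRT/substitution transfer of each $T_i$, with the bijection $pr\mapsto qr$ on moduli) match the paper's construction for the case where $q$ divides no modulus of $\mathcal{C}_0$, and that part is fine. The genuine gap is in your preprocessing for the case where $\mathcal{C}_0$ already contains $q$-nodes. You propose to enlarge each $q$-node to a $q''$-node for a fresh prime $q''>q$ and to cover the $q''-q$ newly created residue classes with power-branch structures in further fresh primes. But a power branch, by construction (Figure~\ref{fig:highpowercondensed}), introduces moduli divisible by $s^2,s^3,\dotsc$ for the fresh prime $s$, so this step destroys square-freeness and your argument cannot deliver the final clause of the lemma (``if all moduli of $\mathcal{C}_0$ are square-free, then all moduli of $\mathcal{C}$ are square-free''). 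Covering those extra classes with \emph{distinct, odd, square-free} moduli in fresh primes is essentially the square-free odd covering problem again, so this is not a detail one can wave away; and even in the non-square-free case the termination of each power branch, the bookkeeping of which fresh primes are used at which of the possibly many $q$-nodes, and the distinctness of all the new moduli are left unargued.

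The paper resolves this case in the opposite direction, and the inequality $q>p$ is exactly what makes it work: instead of enlarging each $q$-node, it \emph{shrinks} each $q$-node to a $p$-node, keeping only the subtrees $T'_0,\dotsc,T'_{p-1}$ and discarding $T'_p,\dotsc,T'_{q-1}$. This creates no new residue classes to cover (the $p$ retained subtrees now partition the node's set modulo $p$ instead of modulo $q$, and each still covers its class by the same CRT argument), so no new moduli are needed at all. The moduli are then updated by swapping the roles of $p$ and $q$: a modulus $pr$ with $p\nmid r$ becomes $qr$, and a modulus $q^{\alpha}s$ with $q\nmid s$ becomes $p^{\alpha}s$. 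This manifestly preserves distinctness, oddness, and square-freeness, keeps every modulus free of $q^2$ (the only $q$-factors now come from the new root), and is legitimate precisely because $q>p$ means one is discarding branches rather than inventing coverage for new ones. You should replace your preprocessing step with this demotion of $q$-nodes to $p$-nodes.
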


\begin{proof}
In the tree diagram of $\mathcal{C}_0$ as shown in Figure~\ref{fig:rootp-node}, the root is a $p$-node, and for each $i\in\{0,1,\dotsc,t-1\}$, $T_i$ denotes the subtree below the node that corresponds to the congruence $\mod{i}{p}$. Note that this is the only $p$-node in the tree since no modulus of $\mathcal{C}_0$ is divisible by $p^2$.

To obtain a tree diagram for $\mathcal{C}$, we are going to replace the root of $\mathcal{C}_0$ by a $q$-node as shown in Figure~\ref{fig:rootq-node}.
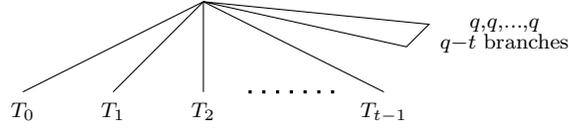
\begin{figure}[H]
\centering
\begin{tikzpicture}[scale=1.2]
\draw(0,0)--(2.5,-0.25)--(2.25,-0.5)--(0,0);
\foreach[count=\j]\i in{-2,-1,0,2}{\draw(0,0)--(\i,-1);}\draw[very thick,loosely dotted](0.5,-1)--(1.5,-1);
\foreach[count=\j]\i in{-2,-1,0}{\tikzmath{\x=int(\j-1));}\node[below]at(\i,-1){$\substack{T_\x}$};}\node[below]at(2,-1){$\substack{T_{t-1}}$};
\node[right]at(2.5,-0.375){$\substack{q,q,\dotsc,q\\q-t\text{ branches}}$};
\end{tikzpicture}
\caption{The root of a tree diagram for $\mathcal{C}$}
\label{fig:rootq-node}
\end{figure}
\noindent The only difficulty in this root replacement is that it may not be compatible with the existing $q$-nodes in the tree diagram for $\mathcal{C}_0$. To solve this issue, we replace every $q$-node in the tree diagram for $\mathcal{C}_0$ (Figure~\ref{fig:otherq-node}) by a $p$-node (Figure~\ref{fig:otherp-node}) and keep only the subtrees $T'_i$ for $i\in\{0,1,\dotsc,p-1\}$. Lastly, if a modulus $m$ in $\mathcal{C}_0$ satisfies $m=pr$ such that $p\nmid r$, then the resultant modulus in $\mathcal{C}$ is $qr$; if a modulus $m$ in $\mathcal{C}_0$ satisfies $m=q^\alpha s$ for some positive integer $\alpha$ such that $q\nmid s$, then the resultant modulus in $\mathcal{C}$ is $p^\alpha s$. This completes our construction of a tree diagram for $\mathcal{C}$.
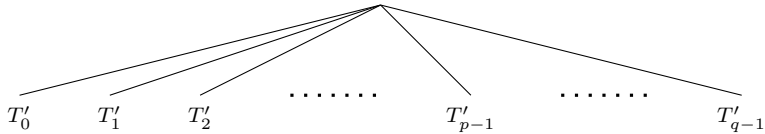
\begin{figure}[H]
\centering
\begin{tikzpicture}[scale=1.2]
\foreach\i in{-4,-3,-2,1,4}{\draw(0,0)--(\i,-1);}\draw[very thick,loosely dotted](-1,-1)--(0,-1);\draw[very thick,loosely dotted](2,-1)--(3,-1);
\foreach[count=\j]\i in{-4,-3,-2}{\tikzmath{\x=int(\j-1));}\node[below]at(\i,-1){$\substack{T'_\x}$};}\node[below]at(1,-1){$\substack{T'_{p-1}}$};\node[below]at(4,-1){$\substack{T'_{q-1}}$};
\end{tikzpicture}
\caption{A $q$-node in the tree diagram for $\mathcal{C}_0$}
\label{fig:otherq-node}
\end{figure}
\begin{figure}[H]
\centering
\begin{tikzpicture}[scale=1.2]
\foreach\i in{-4,-3,-2,1}{\draw(0,0)--(\i,-1);}\draw[very thick,loosely dotted](-1,-1)--(0,-1);
\foreach[count=\j]\i in{-4,-3,-2}{\tikzmath{\x=int(\j-1));}\node[below]at(\i,-1){$\substack{T'_\x}$};}\node[below]at(1,-1){$\substack{T'_{p-1}}$};
\end{tikzpicture}
\caption{A $p$-node in the tree diagram for $\mathcal{C}$}
\label{fig:otherp-node}
\end{figure}
\end{proof}

Combining Lemma~\ref{lem:squarefree} with Theorem~\ref{thm:tau7}, we have the following result.

\begin{corollary}
For all primes $p\geq7$, $\tau_p\leq p-1$.
\end{corollary}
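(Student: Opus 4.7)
The plan is to combine the explicit square-free odd covering constructed in Theorem~\ref{thm:tau7} with the transformation provided by Lemma~\ref{lem:squarefree}, applying the lemma only once. First, for $p=7$ there is nothing new to prove: Theorem~\ref{thm:tau7} already gives a covering system of the integers whose moduli are odd, square-free, and distinct except that $7$ is used exactly $6=7-1$ times, so $\tau_7\leq 7-1$.

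Next, for each prime $p\geq 11$, I would invoke Lemma~\ref{lem:squarefree} with the lemma's prime taken to be $7$, with $t=1$, and with the lemma's $q$ taken to be $p$. Before applying it, I would verify the three hypotheses of the lemma for $\mathcal{C}_0$ equal to the covering from Theorem~\ref{thm:tau7}: the tree diagram in Figure~\ref{fig:six7s} has a $7$-node as its root, and directly below the root are exactly $7-1=6$ child branches labeled by the single modulus $7$, which is precisely the shape required by Figure~\ref{fig:rootp-node} with $t=1$; all other moduli of $\mathcal{C}_0$ are distinct; and no modulus is divisible by $7^2=49$, because $\mathcal{C}_0$ is square-free. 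The furthermore clause of Lemma~\ref{lem:squarefree} then yields a covering system $\mathcal{C}$ of the integers in which $p$ appears exactly $p-1$ times, all other moduli are distinct, no modulus is divisible by $p^2$, and both oddness and square-freeness are inherited from $\mathcal{C}_0$. Hence $\mathcal{C}$ witnesses $\tau_p\leq p-1$.

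Combining the two cases gives $\tau_p\leq p-1$ for every prime $p\geq 7$. The only ``hard part'' is really a bookkeeping check, namely confirming that Figure~\ref{fig:six7s} conforms to the root-shape hypothesis of Lemma~\ref{lem:squarefree} with $t=1$ (and not some larger value of $t$), but this is immediate from inspection of the top of that figure, where all $p-t=6$ repetitions of the prime $7$ sit as leaves directly beneath the root.
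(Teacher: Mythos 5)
Your proposal is correct and is exactly the argument the paper intends: the corollary is stated immediately after the sentence ``Combining Lemma~\ref{lem:squarefree} with Theorem~\ref{thm:tau7}, we have the following result,'' and your write-up simply makes explicit the instantiation $p=7$, $t=1$, $q$ the target prime, together with the verification that Figure~\ref{fig:six7s} has the root shape required by Figure~\ref{fig:rootp-node} and that square-freeness guarantees no modulus is divisible by $7^2$. Nothing is missing.
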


As for the bounds on $t_p$ for primes $p\geq3$, we summarize the results in the following table.

\begin{table}[H]
\centering
\begin{tabular}{|c|c|c|}
\hline
$p$& Upper bound on $t_p$& Sources\\
\hline
$3$& $t_p\leq2=p-1$& \cite{h}\\
$5$& $t_p\leq3=p-2$& \cite{hhm}\\
$7$& $t_p\leq4=p-3$& Theorem~\ref{thm:t7}\\
$11\leq p\leq19$& $t_p\leq p-4$& Theorem~\ref{thm:t11} \& Lemma~\ref{lem:squarefree}\\
$p\geq23$& $t_p\leq p-5$& Theorem~\ref{thm:t23}\\
\hline
\end{tabular}
\caption{Summary of results regarding Question~\ref{question:mainquestion}}
\label{table}
\end{table}

We observe from Table~\ref{table} that there exists a constant $c$ such that for all sufficiently large primes $p$, $t_p\leq p-c$. For future investigation, it is certainly of interest to continue improving on the constant $c$. Nevertheless, it is of greater interest to investigate the following question.

\begin{question}\label{question:final}
Does there exist a constant $0\leq\epsilon<1$ such that for all sufficiently large primes $p$, $t_p\leq\epsilon p$?
\end{question}

The existence of an odd covering system would imply that we can take $\epsilon=0$ for all sufficiently large primes $p$, thus Question~\ref{question:final} provides a progressive approach to solving the odd covering problem.

Before we end this paper, we also provide a similar lemma to Lemma~\ref{lem:squarefree}, which could be useful in the future.

\begin{lemma}\label{lem:nonsquarefree}
Let $p$ be a prime and $t$ be a positive integer such that $t\leq p$. Let $\mathcal{C}_0$ be a covering system of the integers whose tree diagram is given in Figure~$\ref{fig:rootp-node2}$, where all moduli are distinct except that $p$ is used exactly $p-t$ times as a modulus.
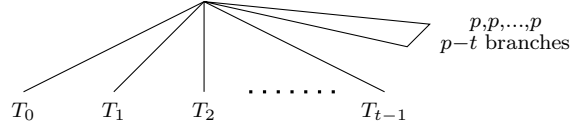
\begin{figure}[H]
\centering
\begin{tikzpicture}[scale=1.2]
\draw(0,0)--(2.5,-0.25)--(2.25,-0.5)--(0,0);
\foreach[count=\j]\i in{-2,-1,0,2}{\draw(0,0)--(\i,-1);}\draw[very thick,loosely dotted](0.5,-1)--(1.5,-1);
\foreach[count=\j]\i in{-2,-1,0}{\tikzmath{\x=int(\j-1));}\node[below]at(\i,-1){$\substack{T_\x}$};}\node[below]at(2,-1){$\substack{T_{t-1}}$};
\node[right]at(2.5,-0.375){$\substack{p,p,\dotsc,p\\p-t\text{ branches}}$};
\end{tikzpicture}
\caption{The root of a tree diagram for $\mathcal{C}_0$}
\label{fig:rootp-node2}
\end{figure}
\noindent Let $M$ be the least common multiple of all moduli of $\mathcal{C}_0$. Then for all odd primes $q\geq t$ that satisfies $q\nmid M$, there exists a covering system $\mathcal{C}$ of the integers such that all moduli are distinct except that $q$ is used exactly $q-t$ times as a modulus. Furthermore, if all moduli of $\mathcal{C}_0$ are odd, then all moduli of $\mathcal{C}$ are odd; if all moduli of $\mathcal{C}_0$ are square-free, then all moduli of $\mathcal{C}$ are square-free.
\end{lemma}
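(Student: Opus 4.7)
The strategy mirrors the proof of Lemma~\ref{lem:squarefree}. I would build the tree for $\mathcal{C}$ by replacing the root $p$-node of $\mathcal{C}_0$ with a $q$-node whose $q - t$ rightmost children are leaves labelled $q$, and whose remaining $t$ children support modified copies of the subtrees $T_0, \ldots, T_{t-1}$. The new feature compared to Lemma~\ref{lem:squarefree} is that $\mathcal{C}_0$ may now contain $p$-nodes at positive depth, so some leaf moduli are divisible by $p^k$ with $k \geq 2$; the modulus transformation must therefore be formulated to handle this.

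Concretely, I would leave the internal structure of each $T_i$ unchanged---same node types, same branching order---and only modify the leaf labels: a leaf of $T_i$ whose modulus in $\mathcal{C}_0$ is $p^k r$ with $p \nmid r$ and $k \geq 1$ is relabelled with modulus $q p^{k-1} r$, while a leaf whose modulus has no $p$-factor keeps its label. Since $q \nmid M$, no leaf of $\mathcal{C}_0$ carries a $q$-factor, so the new $q$-factor is introduced only where intended. A routine bookkeeping argument then shows that the new moduli are pairwise distinct apart from the $q - t$ leaves labelled $q$ at the root: distinct originals $p^{k_1} r_1 \ne p^{k_2} r_2$ yield distinct images, images carrying a $q$-factor cannot collide with unchanged $r$-type images (as $q \nmid M$), and no inherited leaf becomes $q$ because no subtree leaf of $\mathcal{C}_0$ has modulus exactly $p$.

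The main obstacle is verifying that $\mathcal{C}$ still covers every integer, since reducing the $p$-node path-count by one shifts the residue classes that descendants of internal $p$-nodes cover. Fix $n \in \Z$ and let $j$ be the residue of $n$ modulo $q$. If $t \leq j \leq q - 1$ then $n$ is immediately covered by a $q$-leaf; otherwise $0 \leq j \leq t - 1$, and I aim to find a leaf of the modified subtree $T_j'$ covering $n$. Using the Chinese Remainder Theorem together with $\gcd\bigl(p^{v_p(M)},\, M/p^{v_p(M)}\bigr) = 1$, I would choose $\hat n$ satisfying
\[
\hat n \equiv p n + j \pmod{p^{v_p(M)}} \qquad \text{and} \qquad \hat n \equiv n \pmod{M/p^{v_p(M)}}.
\]
Then $\hat n \equiv j \pmod p$, so $\mathcal{C}_0$ covers $\hat n$ via some leaf $\ell$ of $T_j$ with modulus $p^k r$ (or $r$ if the $p$-factor is forgone) and residue $a_\ell$. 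Expanding $a_\ell$ via the branching indices $j, b_2, \ldots, b_k$ at the $p$-nodes on $\ell$'s path gives $a_\ell \equiv j + b_2 p + \cdots + b_k p^{k-1} \pmod{p^k}$, and the corresponding residue in $T_j'$ becomes $a_{\ell'} \equiv b_2 + b_3 p + \cdots + b_k p^{k-2} \pmod{p^{k-1}}$, so $a_{\ell'} \equiv (a_\ell - j)/p \pmod{p^{k-1}}$. Combined with $a_\ell \equiv p n + j \pmod{p^k}$ this yields $a_{\ell'} \equiv n \pmod{p^{k-1}}$; together with the residue mod $r$ inherited unchanged from $\ell$ and the residue $j$ mod $q$ forced by the root branch, this shows $\ell'$ covers $n$.

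Finally, oddness and square-freeness of $\mathcal{C}$ follow at once from the shape of the transformation: multiplying by $q/p$ preserves oddness (when the moduli of $\mathcal{C}_0$ are odd, $p$ itself must be odd and $q$ is odd by hypothesis), and in the square-free case $k \leq 1$ throughout, so the rule reduces to $pr \mapsto qr$ exactly as in Lemma~\ref{lem:squarefree}.
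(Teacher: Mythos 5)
Your construction is exactly the paper's: replace the root $p$-node by a $q$-node with $q-t$ leaves labelled $q$, keep the subtrees $T_0,\dotsc,T_{t-1}$, and send each modulus $p^{\alpha}r$ (with $p\nmid r$) to $qp^{\alpha-1}r$, using $q\nmid M$ to avoid collisions with existing $q$-factors. The only difference is that you spell out the verification of the covering property and of distinctness, which the paper leaves implicit; your residue bookkeeping via $\hat n$ is correct.
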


\begin{proof}
In the tree diagram of $\mathcal{C}_0$ as shown in Figure~\ref{fig:rootp-node2}, the root is a $p$-node, and for each $i\in\{0,1,\dotsc,t-1\}$, $T_i$ denotes the subtree below the node that corresponds to the congruence $\mod{i}{p}$.

To obtain a tree diagram for $\mathcal{C}$, we are going to replace the root of $\mathcal{C}_0$ by a $q$-node as shown in Figure~\ref{fig:rootq-node2}.
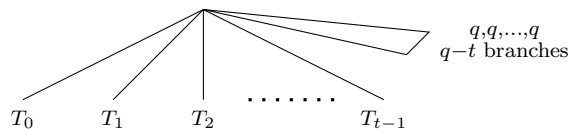
\begin{figure}[H]
\centering
\begin{tikzpicture}[scale=1.2]
\draw(0,0)--(2.5,-0.25)--(2.25,-0.5)--(0,0);
\foreach[count=\j]\i in{-2,-1,0,2}{\draw(0,0)--(\i,-1);}\draw[very thick,loosely dotted](0.5,-1)--(1.5,-1);
\foreach[count=\j]\i in{-2,-1,0}{\tikzmath{\x=int(\j-1));}\node[below]at(\i,-1){$\substack{T_\x}$};}\node[below]at(2,-1){$\substack{T_{t-1}}$};
\node[right]at(2.5,-0.375){$\substack{q,q,\dotsc,q\\q-t\text{ branches}}$};
\end{tikzpicture}
\caption{The root of a tree diagram for $\mathcal{C}$}
\label{fig:rootq-node2}
\end{figure}
\noindent There are no existing $q$-nodes in the tree diagram for $\mathcal{C}_0$, so we do not need to perform any further replacement, and have already obtained a tree diagram for $\mathcal{C}$. Nonetheless, it is worth noting that if a modulus $m$ in $\mathcal{C}_0$ satisfies $m=p^\alpha r$ for some positive integer $\alpha$ such that $p\nmid r$, then the resultant modulus in $\mathcal{C}$ is $qp^{\alpha-1}r$.
\end{proof}


\end{document}